\def\R{{\mathbb R}}
\newtheorem{thm}{Theorem}[section]
\newtheorem{cor}[thm]{Corollary}
\newtheorem{prop}[thm]{Proposition}
\theoremstyle{definition}
\newtheorem{de}[thm]{Definition}
\theoremstyle{remark}
\newtheorem{rem}[thm]{Remark}
\newtheorem{exam}[thm]{Example}
\numberwithin{equation}{section}
\begin{document}

\title[Continuous dependence for MVSDEs]
{Continuous dependence for McKean-Vlasov SDEs under distribution-dependent Lyapunov conditions}

%% First author
\author{Jun Ma}
\address{J. Ma: School of Mathematics and Statistics, and Center for Mathematics and Interdisciplinary Sciences,
Northeast Normal University, Changchun, 130024, P. R. China}
\email{mathmajun@163.com}
\author{Zhenxin Liu}

%% Second author

\address{Z. Liu (Corresponding author): School of Mathematical Sciences,
Dalian University of Technology, Dalian 116024, P. R. China}
\email{zxliu@dlut.edu.cn}
%\thanks{This work is partially supported by NSFC Grants 11271151, 11522104, and the startup and
%Xinghai Youqing funds from Dalian University of Technology.}

\date{}

\subjclass[2010]{37H30, 60H10}

\keywords{McKean-Vlasov SDEs, distribution-dependent Lyapunov conditions, continuous dependence}

\begin{abstract}
In this paper, we consider the continuous dependence on initial values and parameters
of solutions as well as invariant measures for McKean-Vlasov SDEs under distribution-dependent Lyapunov conditions.
In contrast to the classical SDEs, the solutions for McKean-Vlasov SDEs do not converge in probability
although the initial values converge in probability,
which is due to the mismatch of the distances between measures.
Finally, we give some examples to illustrate our theoretical results.
%we show the exponential ergodicity and continuous dependence for a special class of McKean-Vlasov SDEs.
\end{abstract}

\maketitle

\section{Introduction}
A signature of structural stability in dynamical systems is the continuous dependence of solutions and invariant measures
on initial values and parameters.
%Under some suitable conditions, the solutions which conclude parameter will converge to the solution without parameter.
Investigating this problem enables us to understand deeply the robustness
and global bifurcation phenomenon of the dynamical systems in consideration,
which, in addition, is of great significance in practical applications.
In this paper, we prove the continuous dependence of solutions and invariant measures for McKean-Vlasov SDEs (MVSDEs):
\begin{equation}\label{main}
dX_t= b(t,X_t,\mathcal{L}_{X_t})dt+ \sigma(t,X_t,\mathcal{L}_{X_t})dW_t,
\end{equation}
where $\mathcal{L}_{X_t}$ denotes the law of $X_t$.
Due to the dependence of coefficients on distribution, MVSDEs are also known as distribution-dependent SDEs.
In addition, MVSDEs are called mean-field SDEs since they are the limit of $N$-particle systems
if the coefficients satisfy some conditions:
\begin{align}
dX^N_{i,t}= b(t,X^N_{i,t},\mu^N_t)dt+ \sigma(t,X^N_{i,t},\mu^N_t)dW_t,\nonumber
\end{align}
for each $i=1,2,...,N$,
where $\mu^N_t:=\frac 1N \sum_{j=1}^N\delta_{X^N_{j,t}}$;
for details, see Sznitman \cite{Sznitman1,Sznitman2} for instance.

With the increasing demands on practical financial markets and social systems,
MVSDEs have drawn much attention.
For example, the change rate of prices in financial markets may depend on the macrocosmic distribution.
Inspired by Kac's work on the Vlasov kinetic equation \cite{Kac},
MVSDEs were first studied by McKean \cite{Mckean} which showed the propagation of chaos in
physical systems of N-interacting particles related to the Boltzmann equation.
%Sznitman in \cite{Sznitman1,Sznitman2} proved the propagation of chaos and obtained the limit equation under the globally Lipschitz conditions.
%The limit equation can be described as an evolution equation, which is called the aforementioned MVSDE.
In order to study large population deterministic and stochastic differential games,
Lasry, Lions \cite{Larsy-Lions1,Larsy-Lions2,Larsy-Lions3} introduced
mean-field games, which were independent of the work of Huang, Malhame and Caines \cite{HMC1,HMC2}.
Owing to their work, MVSDEs are studied more extensively.
Wang \cite{Wang_18}, Ren et al. \cite{RTW} as well as Liu and Ma \cite{LM1,LM2}
obtained the existence and uniqueness under different conditions.
Butkovsky \cite{Butkovsky}, Eberle et al. \cite{EGZ}, Bogachev et al. \cite{BRS}, Wang \cite{Wang_18} as well as Liu and Ma \cite{LM1,LM2}
showed ergodicity under different conditions.
Buckdahn et al. \cite{BLPR} investigated the relationship between the functional and the associated second-order PDE.
Moreover, Cannarsa et al. \cite{CCCW} find solutions by weak KAM approach, which was also utilized by Iturriaga and Wang \cite{IW} to study the asymptotic behavior for first-order mean-field games.

In this paper, we are interested in the continuous dependence of solutions and invariant measures
on initial values and parameters for MVSDEs under distribution-dependent Lyapunov conditions.
Until now, we find that there are few study about this issue.
Bahlali et al. \cite{BMM} proved the convergence of solutions in $L^2$ uniformly in $[0,T]$ under Lipschitz and linear growth conditions,
where the linear growth condition is linear in space variable and bounded in distribution variable.
Qiao \cite{Qiao} showed the convergence in $L^2$ of the corresponding solutions for multivalued MVSDEs
and the convergence in $W_1$-Wasserstein distance of the corresponding invariant measures
under monotone and linear growth conditions.
Wang \cite{Wang_18} proved the continuous dependence of solutions on determinant initial values under monotone and linear growth conditions.
Hammersley et al. \cite{HSS} investigated the continuous dependence on initial values under a Lyapunov condition, which is different from ours.
Wu et al. \cite{WXZ} obtained the continuous dependence on initial values for McKean-Vlasov stochastic functional differential equations.

Although there are few study about continuous dependence for MVSDEs, there exist some results for the classical SDEs.
In general, there are three ways to illustrate the convergence of the corresponding solutions,
i.e. convergence in $L^2$, probability and distribution.
Friedman \cite{F} showed that the corresponding solutions varied continuously in $L^2$ if the coefficients converge on any compact set.
Gihman and Skorohod \cite{GS} studied the convergence in $L^2$ and probability.
Da Prato and Tudor \cite{DT} proved the continuous dependence in $L^2$ uniformly on $[0,T]$, probability and distribution for semi-linear stochastic partial differential equations (SPDEs) when the coefficients converge point-wise and initial values converges in the above three ways.
Qiu and Wang \cite{QW} get the similar results of recurrent solutions.
Cheng and Liu \cite{CL} illustrated similar results for SPDEs.
In summary, the solutions inherit the convergence of initial values
if the coefficients converge point-wise or on any compact set.
Thus, we want to know whether these results also hold for MVSDEs.

In this paper, we mainly investigate the continuous dependence
of solutions and invariant measures for MVSDEs under distribution-dependent Lyapunov conditions.
To be specific, we study that $X_{k,t}$ converges to $X_t$ in $W_2$-Wasserstein distance uniformly on $[0,T]$
under some suitable conditions (for details see Section 3),
where $X_{k,t}$ and $X_t$ are solutions of the following MVSDEs, respectively:
\begin{align}
\left\{
\begin{aligned}\label{pMVSDE}
dX_{k,t}&= b_k(t,X_{k,t},\mathcal{L}_{X_{k,t}})dt+ \sigma_k(t,X_{k,t},\mathcal{L}_{X_{k,t}})dW_t,\\
X_{k,0}&= \xi_k,
\end{aligned}
\right.
\end{align}
where $k=1,2,...$,
and
\begin{align}
\left\{
\begin{aligned}\label{MVSDE}
dX_t&= b(t,X_t,\mathcal{L}_{X_t})dt+ \sigma(t,X_t,\mathcal{L}_{X_t})dW_t,\\
X_0&= \xi.
\end{aligned}
\right.
\end{align}
In order to illustrate this result, we utilize Skorokhod's representation theorem and martingale representation theorem.
Moreover, the invariant measure of equation \eqref{pMVSDE} converges weakly to the invariant measure of equation \eqref{MVSDE}
if the coefficients $b,\sigma, b_k,\sigma_k$ are independent of $t$,
which is obtained by the same method as above.
However, in contrast to classical SDEs, we do not have the result about the convergence in probability when initial values converge in probability,
since the coefficients, which depend on the distribution, are Lipschitz continuous with respect to the distribution variables in $W_2$-Wasserstein distance,
which gives rise to the mismatch of distance between measures.
To illustrate this situation, we give a counterexample in which the coefficients are Lipschitz.
Meanwhile, for the completeness of this article, we also show that solutions are convergent in $L^2$ uniformly on $[0,T]$,
and invariant measures are convergent in $W_2$ under Lipschitz conditions
when the coefficients converge point-wise.
%in which it seems that the conditions are weaker than that in \cite{BMM}, and the results are stronger than that in \cite{Qiao}.

\iffalse
Moreover, we consider continuous dependence on parameter of solutions and invariant measures for a special class of MVSDEs \eqref{SpMVSDE},
whose coefficients depend `little' on distribution.
The MVSDE \eqref{SpMVSDE} can be written as a perturbation of SDEs with $\epsilon=0$.
Butkovsky in \cite{Butkovsky} studied the exponential ergodicity in the total variation metric under Veretennikov-Khasminskii-type condition.
Eberle at al. in \cite{EGZ} established exponential convergence under one-sided Lipschitz condition, i.e. monotone condition,
in Wasserstein distance $W_1$ and $W_{\rho}$ based on a concave function $\rho$.
In our paper, we first prove the existence and uniqueness of solution as well as invariant measure
under monotone condition in the bounded perturbation case.
Simultaneously, we illustrate that the solution for MVSDE \eqref{SpMVSDE} varies continuously on $\epsilon$,
and the corresponding measure as well as invariant measure are the same.
\fi
The rest of this paper is arranged as follows.
In Section 2, we collect a number of preliminary results
concerning the existence and uniqueness of solutions as well as invariant measures for MVSDEs and so on.
Section 3 presents the continuous dependence of solutions and invariant measures on initial values and parameters
under distribution-dependent Lyapunov conditions,
and give a counterexample which illustrates that the solution does not inherit its convergence if the initial values converge in probability.
%Section 4 establishes continuous dependence under Lipschitz conditions,
%and give a counterexample to show that convergence in probability does not hold.
In Section 4, we provide some examples to illustrate our theoretical results.
In Appendix, we give the proof of continuous dependence under Lipschitz conditions.

\section{Preliminaries}
Throughout the paper, let $(\Omega, \mathcal F, \{\mathcal F_t\}_{t\ge 0},P)$ be a filtered complete probability space.
Assume that the filtration $\{\mathcal F_t\}$ satisfies the usual condition,
i.e. it is right continuous and $\mathcal F_0$ contains all $P$-null sets.
Let $W$ be an $n$-dimensional Brownian motion defined on $(\Omega, \mathcal F, \{\mathcal F_t\}_{t\ge 0}, P)$.
We denote by $A^\top$ the transpose of matrix $A\in \mathbb R^{n\times m}$ with $n,m\geq1$,
$tr(A)$ the trace of $A$ and $|A|:=\sqrt {tr(A^\top A)}$ the norm of $A$.
Assume that the coefficients $b,b_k:[0,\infty) \times \mathbb R^d \times \mathcal P (\mathbb R^d)\to \mathbb R^d$, $\sigma,\sigma_k:[0,\infty) \times \mathbb R^d \times \mathcal P (\mathbb R ^d)\to \mathbb R^{d\times n}$, and
$X_0,X_{k,0}$ are $\mathcal F_0$-measurable and satisfy some integrable condition to be specified below.

Let us introduce some basic notations as follows. Let $\mathcal P(\mathbb R^d)$ be the space of probability measures on $\mathbb R^d$.
We use the usual Wasserstein distance $W_p$ on $\mathcal P_p(\R^d)$ with $p=1,2$ in what follows, i.e.
\begin{equation}
W_p(\mu, \nu):=\inf_{\pi \in \mathcal C(\mu, \nu)} \bigg[\int_{\mathbb R^d\times\R^d} |x-y|^p \pi(dx, dy)\bigg]^{1/p}\nonumber
\end{equation}
for $\mu, \nu\in \mathcal P_p(\R^d)$,
where $\mathcal P_p(\R^d):=\{\mu\in\mathcal P(\R^d): \int_{\R^d}|x|^p \mu(dx)<\infty \}$,
and $\mathcal C(\mu,\nu)$ denotes the set of all coupling between $\mu$ and $\nu$.
We also define Wasserstein distance $\bar W_2$ on $\mathcal P(C[0,T])$ by
\begin{align}\nonumber
\bar W_2(\mu,\nu):=\inf_{\pi\in\mathcal C(\mu,\nu)}\bigg[\int_{C[0,T]\times C[0,T]}|x-y|^2\pi(dx,dy)\bigg]^{1/2},
\end{align}
for $\mu,\nu\in\mathcal P(C[0,T])$,
where $\mathcal P(C[0,T])$ denotes the set of probability measures on $C[0,T]$.
%This should not cause confusion with $W_\rho$ and $\mathcal P_\rho$ introduced above.
As usual, we also denote $\mu(f):=\int_{\R^d} f(x)\mu(dx)$ in what follows for any function $f$ defined on $\R^d$ and $\mu\in\mathcal P(\R^d)$.

We introduce two propositions which show the existence and uniqueness of solutions as well as invariant measures
for MVSDEs under distribution-dependent Lyapunov conditions; for details see \cite{LM2}.
Before giving these results, we introduce some definitions about Lions derivative and differentiable space;
for details see Cardaliaguet \cite{Cardaliaguet} and Chassagneux et al. \cite{CCD} for instance.
\begin{de}[Lions derivative]
A function $f:\mathcal P_2(\mathbb R^d)\to\mathbb R$ is called differentiable at $\mu\in\mathcal P_2(\mathbb R^d)$, denoted by $\partial_{\mu} f$,
if there exists a random variable $X\in L^2(\Omega)$ with $\mu=\mathcal L_X$
such that $F(X):=f(\mathcal L_X)$ and $F$ is Fr\'echet differentiable at $X$.
$f$ is called differentiable on $\mathcal P_2(\mathbb R^d)$ if $f$ is differentiable at any $\mu\in\mathcal P_2(\mathbb R^d)$.
\end{de}
\begin{de}
The space $C^{(1,1)}(\mathcal P_2(\mathbb R^d))$ contains the functions $f:\mathcal P_2(\mathbb R^d)\to\mathbb R$
satisfying the following conditions:
(i) $f$ is differentiable and its derivative $\partial_{\mu} f(\mu)(y)$ has a jointly continuous version in $(\mu,y)$, still denoted $\partial_{\mu} f(\mu)(y)$;
%(ii) the map $(\mu,y)\mapsto \partial_{\mu} f(\mu)(y)$ is locally bounded;
%(iii) $(\mu,y)\mapsto \partial_{\mu} f(\mu)(y)$ is  continuous at any $(\mu,y)$ with $y\in supp(\mu)$;
(ii) $\partial_{\mu} f(\mu)(\cdot)$ is continuously differentiable for any $\mu$,
% is locally bounded in $(\mu,y)$
and its derivative $\partial_y \partial_{\mu} f(\mu)(y)$ is jointly continuous at any $(\mu,y)$.
\end{de}
\begin{enumerate}

\item [(H1)]For any $N\geq 1$, there exists a constant $ C_N\geq 0 $ such that for any $ |x|, |y|\leq N$ and  ${\rm supp} \mu, {\rm supp} \nu \subset B(0,N)$ we have
\begin{align*}
&|b(t, x, \mu) |+ |\sigma(t, x, \mu)| \leq C_N,\\
&| b(t, x, \mu)- b(t, y, \nu) |+ |\sigma(t, x, \mu)- \sigma(t, y, \nu)| \leq  C_N\big(| x- y | + W_2(\mu,\nu)\big).
\end{align*}
Here $B(0,N)$ denotes the closed ball in $\R^d$ centered at the origin with radius $N$.

\item [(H2)](Lyapunov condition)
There exists a nonnegative function $ V\in C^{2,(1,1)}(\mathbb R^d \times \mathcal P_2(\mathbb R^d))$ such that there exist nonnegative constants $ \lambda \in \mathbb R^+ $ satisfying for all $(t,x,\mu)\in \mathbb R^+\times\mathbb R^d \times \mathcal P_2(\R^d)$
\begin{align*}
(\mathcal LV)(t,x,\mu)  &\leq  \lambda V(x, \mu), \\
V_R(\mu)&:= \inf_{|x|\geq R}V(x, \mu)\to \infty ~as~ R\to \infty,
\end{align*}
%and $\tilde\lambda=0$ if the Lyapunov function $V$ depends trivially on $\mu\in\mathcal P(\mathbb R^d)$,
where
\begin{align*}
C^{2,(1,1)}(\mathbb R^d\times \mathcal P_2(\mathbb R^d))
:={}&\{f:\mathbb R^d\times\mathcal P_2(\mathbb R^d)\to\mathbb R|
f(\cdot, \mu)\in C^2(\mathbb R^d) \quad \hbox{for }\mu\in\mathcal P_2(\mathbb R^d),\\
&\quad f(x,\cdot)\in C^{(1,1)}(\mathcal P_2(\mathbb R^d))\quad \hbox{for }x\in \mathbb R^d\},
\end{align*}
and
\begin{align*}
(\mathcal LV)(t,x,\mu)
:={}&
b(t,x,\mu)\cdot \partial_x V(x,\mu)+ \frac 12tr((\sigma\sigma^{\top})(t,x,\mu)\cdot\partial_x^2 V(x,\mu))\\
&+{}\int \big[b(t,y,\mu)\cdot\partial_{\mu} V(x,\mu)(y)
 + \frac 12tr((\sigma\sigma^{\top})(t,y,\mu)\cdot\partial_y \partial_{\mu} V(x,\mu)(y)\big]\mu(dy).
\end{align*}

\item [(H2')](Lyapunov condition)
There exists a nonnegative function $V\in C^{2,(1,1)}(\mathbb R^d \times \mathcal P_2(\mathbb R^d))$ such that there is a constant $\gamma\ge 0$ satisfying
\begin{align*}
(\mathcal LV)(x,\mu)  &\leq{} -\gamma, \\
V_R&:={} \inf_{|x|\vee|\mu|_2\geq R}V(x, \mu)\to \infty ~as~ R\to \infty,
\end{align*}
for all $(x,\mu)\in \mathbb R^d \times \mathcal P_2(\mathbb R^d)$,
where $|\mu|_2^2:=\int_{\mathbb R^d}|x|^2\mu(dx)$.

\item [(H3)](Continuity) For any bounded sequences $ \{{x_n, \mu_n}\} \in \mathbb R^d \times \mathcal P_V (\mathbb R^d) $ with $ x_n \to x $ and $\mu_n \to \mu$ weakly in $\mathcal P (\mathbb R^d)$ as $ n\to \infty $, we have
\begin{equation}\nonumber
\lim_{n\to \infty} \sup_{t\in [0,T]}{ | b(t, x_n, \mu _n)- b(t, x, \mu) |+ |\sigma(t, x_n, \mu _n)- \sigma(t, x, \mu)|}= 0.
\end{equation}
where $ \mathcal P_V (\mathbb R^d):= \left\{ \mu \in \mathcal P(\mathbb R^d):\int_{\mathbb R^d} V(x,\mu)\mu(dx) <\infty\right\}$.

\item[(H4)] There exist constants $\ell>1,K>0$ such that
\begin{align*}
|b(t,x,\mu)|^{2\ell}+|\sigma(t,x,\mu)|^{2\ell} \le K(1+V(x,\mu)).
\end{align*}

\item [(H5)]There exist constants $M,\epsilon >0 $ and increasing unbounded function $L: \mathbb N \to (0,\infty)$ such that for any $n\geq 1,x,y\in C[0,T],|x(t)|\vee |y(t)| \leq n$ and $\mu,\nu \in \mathcal P (C[0,T])$ satisfying
\begin{align*}
&| b(t, x_t, \mu_t)- b(t, y_t, \nu_t) |+ |\sigma(t,x_t, \mu_t)- \sigma(t, y_t, \nu_t)|\\
&\leq L_n(|x_t- y_t|+ W_{2,n}(\mu_t , \nu_t)+ Me^{-\epsilon L_n}(1\wedge W_2(\mu_t,\nu_t))),
\end{align*}
where
\begin{align*}
W_{2,n}^2(\mu_t,\nu_t)&:= \inf_{\pi \in \mathcal C(\mu,\nu)} \int_{C[0,T] \times C[0,T]}|x_{t\wedge \tau_x^n\wedge \tau_y^n} -y_{t\wedge \tau_x^n\wedge \tau_y^n}|^2 \pi(dx,dy),\\
\tau_x^n&:=\inf\{t\ge 0: |x_t|\ge n\},
\tau_y^n:=\inf\{t\ge 0: |y_t|\ge n\}.
\end{align*}

\end{enumerate}
\begin{prop}\cite[Theorem 3.2]{LM2}\label{prop1}
Assume (H1)--(H4). Then for any $T>0, X_0\in L^2(\Omega, \mathcal F, P)$, equation \eqref{MVSDE} has a solution $X_.$
which satisfies
\begin{equation}\label{ES}
EV(X_t,\mathcal L_{X_t})\le e^{\lambda t}EV(X_0,\mathcal L_{X_0}),\quad\hbox{for } t\in [0,T].
\end{equation}
Moreover, if (H5) holds, the solution is unique.
\end{prop}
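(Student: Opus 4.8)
The plan is to obtain existence by a truncation-and-compactness scheme and uniqueness by a localization argument tailored to the precise form of (H5). For existence, fix $T>0$ and $X_0\in L^2$. For $N\ge 1$ let $\Pi_N\colon\R^d\to\bar B(0,N)$ be the radial retraction and pick $\psi_N\in C_c^\infty(\R^d)$ with $\mathbf 1_{B(0,N-1)}\le\psi_N\le\mathbf 1_{B(0,N)}$, and set $b_N(t,x,\mu):=\psi_N(x)\,b(t,\Pi_N x,(\Pi_N)_\#\mu)$ and $\sigma_N(t,x,\mu):=\psi_N(x)\,\sigma(t,\Pi_N x,(\Pi_N)_\#\mu)$. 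Since $\Pi_N$ and $\psi_N$ are Lipschitz and $W_2((\Pi_N)_\#\mu,(\Pi_N)_\#\nu)\le W_2(\mu,\nu)$, hypothesis (H1) makes $b_N,\sigma_N$ bounded and globally Lipschitz in $(x,\mu)$, uniformly for $t\in[0,T]$. Hence the MVSDE with coefficients $b_N,\sigma_N$ and datum $X_0$ has a unique strong solution $X^N$: freeze a measure flow $\nu\in\mathcal P_2(C[0,T])$, solve the resulting classical SDE, and apply Banach's fixed point theorem to $\nu\mapsto\mathcal L_{X^{N,\nu}}$ in $(\mathcal P_2(C[0,T]),\bar W_2)$ on a short interval, then concatenate over $[0,T]$.

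Next I would establish the $V$-estimate and non-explosion. Put $\tau_N:=\inf\{t\ge 0:|X^N_t|\ge N-1\}$. A crude uniform-in-$N$ bound on $E\sup_{s\le T}|X^N_s|^2$ (coming from (H4) and the Lyapunov structure) shows that the error made in the coefficients by replacing $\mathcal L_{X^N_t}$ with $(\Pi_N)_\#\mathcal L_{X^N_t}$ tends to $0$ as $N\to\infty$. Applying the Itô formula for functions of a process and of its flow of marginals (\cite{BLPR,CCD}) to $t\mapsto V(X^N_t,\mathcal L_{X^N_t})$, using that on $\{|x|\le N-1\}$ the truncated generator agrees with $\mathcal LV$ up to this vanishing error and invoking the Lyapunov inequality $\mathcal LV\le\lambda V$ of (H2), I would get
\[ E\,V(X^N_{t\wedge\tau_N},\mathcal L_{X^N_{t\wedge\tau_N}})\ \le\ e^{\lambda t}\,E\,V(X_0,\mathcal L_{X_0})+o_N(1),\qquad t\in[0,T]. \]
Because $V_R(\cdot)\to\infty$, Markov's inequality gives $P(\tau_N\le T)\to 0$, so $X^N$ does not explode and $\{\mathcal L_{X^N}\}_N$ is tight in $\mathcal P(C([0,T];\R^d))$. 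By Prokhorov and the Skorokhod representation theorem a subsequence of $X^N$ converges a.s.\ in $C([0,T];\R^d)$ to some $X$ on an auxiliary space, whence $\mathcal L_{X^N_t}\to\mathcal L_{X_t}$ weakly for each $t$; using the weak-continuity hypothesis (H3) and the uniform integrability of the coefficients along the sequence (from the uniform $V$-moment bound, (H4) and de la Vallée--Poussin) one passes to the limit in the associated martingale problem and reconstructs a driving Brownian motion via the martingale representation theorem, so that $X$ solves \eqref{MVSDE}. Estimate \eqref{ES} then follows by running the same Itô computation on $V(X_t,\mathcal L_{X_t})$ together with Fatou's lemma.

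For uniqueness under (H5), let $X,Y$ solve \eqref{MVSDE} with the same $W$ and $X_0=Y_0$, fix $n\ge 1$, set $\theta_n:=\tau^n_X\wedge\tau^n_Y$ with $\tau^n_X=\inf\{t:|X_t|\ge n\}$, $\tau^n_Y=\inf\{t:|Y_t|\ge n\}$, and $\Delta_t:=E|X_{t\wedge\theta_n}-Y_{t\wedge\theta_n}|^2$. The identity coupling gives, for $s\le\theta_n$, $W_{2,n}(\mathcal L_{X_s},\mathcal L_{Y_s})^2\le\Delta_s$ and $1\wedge W_2(\mathcal L_{X_s},\mathcal L_{Y_s})\le 1$; Itô's formula, the Burkholder--Davis--Gundy inequality and (H5) then produce an inequality of the shape $\Delta_t\le C\int_0^t(L_n^2\Delta_s+L_n^2M^2e^{-2\epsilon L_n})\,ds$. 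The task is to combine its Gronwall consequence with the a priori $V$-bound (which controls $P(\theta_n<T)$ and hence $E[|X_T-Y_T|^2;\theta_n<T]$) and with the exponential gain $e^{-\epsilon L_n}$ of (H5) so as to conclude $E|X_T-Y_T|^2=0$, and similarly at every $t\in[0,T]$, giving pathwise uniqueness.

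I expect the main obstacle to be exactly the interaction of the distribution dependence with these two localizations. In the a priori estimate, truncating the measure argument spoils the exact inequality $\mathcal LV\le\lambda V$, so its error must be controlled uniformly in $N$, which is where the growth bound (H4) is indispensable. In the uniqueness proof the natural object produced by the stopping times is the path-truncated Wasserstein distance $W_{2,n}$, which does not dominate the genuine $W_2$-distance of the marginals --- this is precisely why (H5) carries the extra term $Me^{-\epsilon L_n}(1\wedge W_2)$ --- and closing the Gronwall/Bihari bookkeeping so that the divergent factor $e^{CL_n^2T}$ coming from the local Lipschitz constant does not overwhelm the gain $e^{-\epsilon L_n}$ (presumably via a Lyapunov-weighted metric, a Bihari-type refinement, or by first forcing $\mathcal L_{X_\cdot}=\mathcal L_{Y_\cdot}$ so that the source term is annihilated) is the delicate point.
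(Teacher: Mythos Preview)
The paper does not prove this proposition at all: it is quoted verbatim as a preliminary result from \cite[Theorem~3.2]{LM2}, so there is no ``paper's own proof'' to compare your proposal against. Your outline follows a standard and reasonable strategy for results of this type---truncation to globally Lipschitz coefficients, a uniform $V$-estimate to get tightness, and passage to the limit through the martingale problem---and is broadly consistent with how such theorems are established.

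That said, in your uniqueness sketch you have correctly put your finger on a genuine issue without resolving it. Your Gronwall inequality produces a factor of order $e^{CL_n^2T}$ multiplying a source of order $L_n^2M^2e^{-2\epsilon L_n}$, and since $L_n\to\infty$ the product diverges rather than vanishes; none of the three fixes you list at the end (Lyapunov-weighted metric, Bihari refinement, or first proving $\mathcal L_X=\mathcal L_Y$) is carried out, and the last one is circular because weak uniqueness for a McKean--Vlasov SDE is not available a priori. The actual argument in \cite{LM2} must exploit the specific structure of (H5)---in particular the path-space nature of $W_{2,n}$ and the fact that the source term carries the factor $1\wedge W_2(\mu_t,\nu_t)$, which can itself be fed back into the estimate---in a way that your sketch does not yet capture. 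If you want a self-contained proof you will need to consult \cite{LM2} for that step.
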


\begin{prop}\cite[Theorem 4.3]{LM2}\label{prop2}
Assume that (H1), (H2'), (H3)-(H5) holds with time-independent coefficients, semigroup $P_t$ is Feller,
and that there is a function $C:\mathbb R^+\to\mathbb R^+$ satisfying $\sup_{t\in\mathbb R^+}C(t)<\infty$ such that
\begin{align}\nonumber
\omega(P_t^*\mu,P_t^*\nu)
\le{}
C(t)\omega(\mu,\nu) \quad \hbox{for }\mu,\nu\in\mathcal P_2(\mathbb R^d),
\end{align}
where $\omega$ is L\'evy-Prohorov distance and $P_t^*\mu$ denotes the distribution at time $t$ with initial distribution $\mu$.
Then there exists at least one invariant measure.
Moreover, if there exists a constant $t_0>0$ such that $C(t_0)<1$,
there is a unique invariant measure.
\end{prop}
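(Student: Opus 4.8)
The plan is to construct an invariant measure by a Krylov--Bogolyubov averaging procedure driven by the Lyapunov condition (H2') and the Feller property, and then to upgrade this to uniqueness by a Banach fixed point argument based on the L\'evy--Prohorov contraction hypothesis. First I would fix $\mu_0\in\mathcal P_V(\mathbb R^d)$, let $X_\cdot$ be the solution of \eqref{MVSDE} with $\mathcal L_{X_0}=\mu_0$ (unique under (H1),(H3)--(H5)), and write $\mu_t:=\mathcal L_{X_t}=P_t^*\mu_0$. Applying It\^o's formula to $V(X_t,\mu_t)$ with the customary localization by exit times of $X$ from large balls, taking expectations, and using (H4) to pass to the limit, exactly as in the proof of \eqref{ES} in Proposition \ref{prop1}, one gets (since $-\gamma\le0$) that $t\mapsto EV(X_t,\mu_t)$ is nonincreasing. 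Hence $M:=\sup_{t\ge0}\int_{\mathbb R^d}V(x,\mu_t)\,\mu_t(dx)<\infty$; because $V_R=\inf_{|x|\vee|\mu|_2\ge R}V(x,\mu)\to\infty$, this forces $\sup_{t\ge0}|\mu_t|_2<\infty$ and, by Chebyshev, $\sup_{t\ge0}\mu_t(\{|x|\ge R'\})\le M/V_{R'}\to0$. Thus $\{P_t^*\mu_0:t\ge0\}$ is tight, and so is the family of Ces\`aro means $\nu_T:=\frac1T\int_0^TP_t^*\mu_0\,dt$, $T>0$.

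Next I would use Prokhorov's theorem to pick $T_k\to\infty$ with $\nu_{T_k}\rightharpoonup\bar\mu$, and show $P_s^*\bar\mu=\bar\mu$ for each $s\ge0$ via the triangle inequality
\begin{align*}
\omega(P_s^*\bar\mu,\bar\mu)
\le{}& \omega(P_s^*\bar\mu,P_s^*\nu_{T_k})
+\omega\Big(P_s^*\nu_{T_k},\tfrac1{T_k}\int_0^{T_k}P_{t+s}^*\mu_0\,dt\Big)\\
&+\omega\Big(\tfrac1{T_k}\int_0^{T_k}P_{t+s}^*\mu_0\,dt,\nu_{T_k}\Big)
+\omega(\nu_{T_k},\bar\mu)
\end{align*}
in the L\'evy--Prohorov distance $\omega$: the first term is $\le C(s)\,\omega(\bar\mu,\nu_{T_k})\to0$ by hypothesis, the last term vanishes by the choice of $T_k$, and the third is $O(s/T_k)$ because the two averages differ only over $[0,s]$ and $[T_k,T_k+s]$. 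The remaining ``defect'' term compares $P_s^*$ applied to an average with the average of the pushforwards $P_s^*\mu_t=P_{t+s}^*\mu_0$, and it is genuinely nonzero because the nonlinear semigroup $P_t^*$ does not commute with convex averaging; controlling it is exactly what the Feller property, the semigroup identity $P_s^*P_t^*=P_{s+t}^*$, and the uniform bound $\sup_tC(t)<\infty$ are for. Once this defect is shown to vanish along $T_k$, letting $k\to\infty$ gives $\omega(P_s^*\bar\mu,\bar\mu)=0$; since the bound of the first step persists in the weak limit (by the continuity properties of $V$ on tight sets), $\bar\mu\in\mathcal P_V(\mathbb R^d)$ is an invariant measure. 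I expect this defect term --- quantifying how far the McKean--Vlasov flow is from respecting averages --- to be the main obstacle of the whole argument.

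For uniqueness when $C(t_0)<1$, I would work on $\mathcal K_M:=\{\mu\in\mathcal P_2(\mathbb R^d):\int_{\mathbb R^d}V(x,\mu)\mu(dx)\le M\}$. By the monotonicity of the first step, $P_{t_0}^*$ maps each $\mathcal K_M$ into itself; uniform tightness of $\mathcal K_M$ together with the joint continuity of $V$ makes it $\omega$-closed, hence a complete metric space, and on it $P_{t_0}^*$ is a strict contraction. Banach's fixed point theorem then gives a unique $\bar\mu\in\mathcal K_M$ with $P_{t_0}^*\bar\mu=\bar\mu$, and $\bar\mu$ is independent of $M$ since $\mathcal K_M\subseteq\mathcal K_{M'}$ for $M\le M'$. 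As $P_{t_0}^*(P_s^*\bar\mu)=P_s^*(P_{t_0}^*\bar\mu)=P_s^*\bar\mu$, the measure $P_s^*\bar\mu$ is also a fixed point of $P_{t_0}^*$, hence equals $\bar\mu$; so $\bar\mu$ is $P_t^*$-invariant for all $t$. Finally, any invariant measure lies in some $\mathcal K_{M'}$ and is a fixed point of $P_{t_0}^*$, so by the contraction property it coincides with $\bar\mu$; besides the defect term above, the only extra technical point here is checking that $\mathcal K_M$ is $\omega$-closed.
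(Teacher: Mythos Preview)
The paper does not prove this proposition at all: it is stated with the citation \cite[Theorem 4.3]{LM2} and used as a black box, so there is no ``paper's own proof'' to compare against. I therefore comment only on the soundness of your sketch.

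Your existence argument has a real gap, and you have already named it yourself: the ``defect'' term
\[
\omega\Big(P_s^*\nu_{T_k},\ \tfrac{1}{T_k}\int_0^{T_k}P_{t+s}^*\mu_0\,dt\Big)
\]
is not controlled by any of the hypotheses you invoke. The Krylov--Bogolyubov scheme works for classical Markov semigroups precisely because $P_s^*$ is \emph{linear} and hence commutes with averaging; for the McKean--Vlasov flow $P_s^*$ is genuinely nonlinear, and neither the Feller property, nor the semigroup identity $P_s^*P_t^*=P_{s+t}^*$, nor the bound $\sup_tC(t)<\infty$ says anything about how $P_s^*$ acts on convex mixtures of measures. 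Concretely: the Feller property gives continuity of $\mu\mapsto P_s^*\mu$ at a single point, the semigroup identity concerns composition, and the $\omega$-Lipschitz bound compares two inputs, not a continuum of them. So ``once this defect is shown to vanish'' is exactly the missing idea, not a step that follows. A route that avoids averaging altogether---e.g.\ extracting a weak limit directly from the tight orbit $\{\mu_t\}$ and combining the $\omega$-continuity of $P_s^*$ with a compactness/fixed-point argument (Schauder--Tychonoff type) on a suitable tight, convex, weakly closed set---is more in the spirit of how such results are usually proved for nonlinear flows.

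Your uniqueness argument is essentially correct but heavier than necessary. Once two invariant measures $\mu,\nu\in\mathcal P_2(\mathbb R^d)$ exist, the single line
\[
\omega(\mu,\nu)=\omega(P_{t_0}^*\mu,P_{t_0}^*\nu)\le C(t_0)\,\omega(\mu,\nu)
\]
with $C(t_0)<1$ forces $\omega(\mu,\nu)=0$; no Banach fixed-point setup on $\mathcal K_M$ is needed for uniqueness (though your contraction-on-$\mathcal K_M$ argument would give an alternative \emph{existence} proof under the extra hypothesis $C(t_0)<1$, sidestepping the defect problem entirely in that case). The only point to be careful about in your version is the $\omega$-closedness of $\mathcal K_M$, which requires lower semicontinuity of $\mu\mapsto\int V(x,\mu)\,\mu(dx)$ under weak convergence; ``joint continuity of $V$'' is not quite enough by itself since $V$ is unbounded, so a Fatou-type argument is needed there.
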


%%%%%%%%%%%%%%%%%%%%%%%%%%%%%%%%%%%%%%%%%%%%%%%%%%%%%%%%%%%%%%%%%%%%%%%%%%%%%%%%%%%%%%%%%%%%%%%%%%%%%%%%%%%%%%%%%%%%%%
\section{Continuous dependence}
In this section, we consider continuous dependence of solutions and invariant measures on initial values and parameters for MVSDEs
under distribution-dependent Lyapunov conditions. And we divide this section into three parts.

\subsection{Continuous dependence of solutions}
In this subsection, we prove that solutions vary continuously with respect to initial values and parameters
under distribution-dependent Lyapunov conditions,
which is obtained by Skorokhod's representation theorem and martingale representation theorem.
\begin{thm}\label{thm1}
Assume that the conditions in Proposition \ref{prop1} hold with $b,\sigma$ replaced by $b_k,\sigma_k$,
and
\begin{align}\label{Cond1}
\lim_{k\to\infty}EV(X_{k,0},\mathcal L_{X_{k,0}})=EV(X_{0},\mathcal L_{X_{0}}).%W_2(\mathcal L_{X_{k,0}},\mathcal L_{X_0})= 0.
\end{align}
Assume further that
\begin{align}\label{CoeCon1}
\lim_{k\to \infty}|b_k(t, x, \mu)- b(t, x, \mu)|+ |\sigma_k(t, x, \mu)- \sigma(t, x, \mu)|=0 \quad  \mbox{point-wise}.
\end{align}
%and $b_k,\sigma_k$ are continuous uniformly on $k$.
Then solutions vary continuously with respect to initial values and parameters.
To be precise, $$X_{k}\to X ~in ~W_2~on~C[0,T].$$
\end{thm}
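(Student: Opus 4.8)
The plan is to prove the convergence by a compactness-and-uniqueness argument: establish tightness of the family of path laws $\{\mathcal L_{X_k}\}$ on $C[0,T]$, extract a weakly convergent subsequence, realize it through Skorokhod's representation theorem on an auxiliary probability space, identify the limit as a solution of \eqref{MVSDE} with initial law $\mathcal L_\xi$ with the aid of the martingale representation theorem, and then invoke the uniqueness in Proposition~\ref{prop1}; since the reasoning applies to every subsequence, the whole sequence converges. First I would record the a priori bounds. Applying Proposition~\ref{prop1} to \eqref{pMVSDE} gives $EV(X_{k,t},\mathcal L_{X_{k,t}})\le e^{\lambda t}EV(X_{k,0},\mathcal L_{X_{k,0}})$, which together with \eqref{Cond1} yields $\Lambda:=\sup_k\sup_{0\le t\le T}EV(X_{k,t},\mathcal L_{X_{k,t}})<\infty$; by (H4) this bounds $E|b_k(s,X_{k,s},\mathcal L_{X_{k,s}})|^{2\ell}$ and $E|\sigma_k(s,X_{k,s},\mathcal L_{X_{k,s}})|^{2\ell}$ uniformly in $k$ and $s\in[0,T]$. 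Inserting these bounds into the integral form of \eqref{pMVSDE}, together with Hölder's inequality and the Burkholder--Davis--Gundy inequality, gives $E|X_{k,t}-X_{k,s}|^{2\ell}\le C|t-s|^{\ell}$ uniformly in $k$, and (using also that the initial data are bounded in $L^{2\ell}$) $\sup_kE\|X_k\|_{C[0,T]}^{2\ell}<\infty$. Since $\ell>1$, the Kolmogorov--Chentsov criterion, together with tightness of $\{\mathcal L_{X_{k,0}}\}$ (from \eqref{Cond1} and the coercivity of $V$ in (H2)), shows that $\{\mathcal L_{(X_k,W)}\}$ is tight on $C([0,T];\R^d\times\R^n)$.

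Next, I would fix a subsequence (not relabeled) along which $\mathcal L_{(X_k,W)}$ converges weakly and apply Skorokhod's representation theorem: on some $(\tilde\Omega,\tilde{\mathcal F},\tilde P)$ there are $(\tilde X_k,\tilde W_k)$ and $(\tilde X,\tilde W)$ with $\mathcal L_{(\tilde X_k,\tilde W_k)}=\mathcal L_{(X_k,W)}$, with $(\tilde X_k,\tilde W_k)\to(\tilde X,\tilde W)$ $\tilde P$-a.s.\ uniformly on $[0,T]$, and with $\tilde W_k,\tilde W$ Brownian motions. The bound $\sup_kE\|\tilde X_k\|_{C[0,T]}^{2\ell}=\sup_kE\|X_k\|_{C[0,T]}^{2\ell}<\infty$ makes $\{\|\tilde X_k\|_{C[0,T]}^2\}$ uniformly integrable, so $E\|\tilde X_k-\tilde X\|_{C[0,T]}^2\to0$; hence $\bar W_2(\mathcal L_{X_k},\mathcal L_{\tilde X})\le(E\|\tilde X_k-\tilde X\|_{C[0,T]}^2)^{1/2}\to0$ and, with $\tilde\mu_t:=\mathcal L_{\tilde X_t}$, also $W_2(\mathcal L_{X_{k,t}},\tilde\mu_t)=W_2(\mathcal L_{\tilde X_{k,t}},\tilde\mu_t)\to0$, uniformly in $t\in[0,T]$. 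It then remains to show $\mathcal L_{\tilde X}=\mathcal L_X$.

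For the identification, since $(\tilde X_k,\tilde W_k)$ has the same law as $(X_k,W)$ and $X_k$ solves \eqref{pMVSDE}, $\tilde W_k$ is a Brownian motion for the augmented filtration generated by $(\tilde X_k,\tilde W_k)$ and $\tilde P$-a.s.
\[
\tilde X_{k,t}=\tilde X_{k,0}+\int_0^t b_k(s,\tilde X_{k,s},\mathcal L_{\tilde X_{k,s}})\,ds+\int_0^t\sigma_k(s,\tilde X_{k,s},\mathcal L_{\tilde X_{k,s}})\,d\tilde W_{k,s}.
\]
I would let $k\to\infty$. The left side and $\tilde X_{k,0}$ converge a.s., and $\mathcal L_{\tilde X_{k,0}}=\mathcal L_{\xi_k}\to\mathcal L_\xi$ by the assumed convergence of the initial data, so $\mathcal L_{\tilde X_0}=\mathcal L_\xi$. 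For the drift, the arguments $(\tilde X_{k,s},\mathcal L_{\tilde X_{k,s}})$ a.s.\ stay in a fixed compact subset of $\R^d\times(\mathcal P_2(\R^d),W_2)$ (by the a.s.\ uniform convergence of $\tilde X_k$ and the relative compactness of $\{\mathcal L_{\tilde X_{k,s}}\}_{k,s}$ in $\mathcal P_2$, a consequence of the uniform $2\ell$-moment bound), so combining the pointwise convergence \eqref{CoeCon1} with the local Lipschitz and continuity conditions (H1), (H3) gives $b_k(s,\tilde X_{k,s},\mathcal L_{\tilde X_{k,s}})\to b(s,\tilde X_s,\tilde\mu_s)$ for a.e.\ $(s,\tilde\omega)$, and the $ds$-integral passes to the limit by dominated convergence (the integrand being dominated via (H4)). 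The stochastic term is handled by passing to the limit in the martingale relations for $\tilde M_{k,t}:=\int_0^t\sigma_k(s,\tilde X_{k,s},\mathcal L_{\tilde X_{k,s}})\,d\tilde W_{k,s}$: one obtains that $\tilde M_t:=\tilde X_t-\tilde X_0-\int_0^t b(s,\tilde X_s,\tilde\mu_s)\,ds$ is a continuous martingale for the augmented filtration of $(\tilde X,\tilde W)$ with $\langle\tilde M\rangle_t=\int_0^t(\sigma\sigma^\top)(s,\tilde X_s,\tilde\mu_s)\,ds$. By the martingale representation theorem there is a Brownian motion $\hat W$ (on $\tilde\Omega$, enlarged if necessary) with $\tilde M_t=\int_0^t\sigma(s,\tilde X_s,\tilde\mu_s)\,d\hat W_s$; hence $\tilde X$ solves \eqref{MVSDE} with initial law $\mathcal L_\xi$.

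By the uniqueness assertion of Proposition~\ref{prop1} (this is where (H5) is used), $\mathcal L_{\tilde X}=\mathcal L_X$ on $C[0,T]$, so $\bar W_2(\mathcal L_{X_k},\mathcal L_X)\to0$ along the chosen subsequence; since every subsequence has a further subsequence for which this holds, $X_k\to X$ in $\bar W_2$ on $C[0,T]$. I expect the main obstacle to be the identification step, specifically the passage to the limit through the coefficients: \eqref{CoeCon1} is only pointwise and (H1) provides a modulus of continuity only for measures with bounded support, so upgrading these to the convergence $b_k(s,\tilde X_{k,s},\mathcal L_{\tilde X_{k,s}})\to b(s,\tilde X_s,\tilde\mu_s)$ (and its $\sigma$-analogue, in $L^2(\tilde\Omega\times[0,T])$) requires a careful truncation/localization argument exploiting the growth bound (H4) and the fact that the time-marginals $\mathcal L_{\tilde X_{k,t}}$ converge in $W_2$ rather than merely weakly — the same uniform integrability being needed again for the final upgrade of weak convergence to $\bar W_2$-convergence.
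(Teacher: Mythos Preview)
Your proposal is correct and follows essentially the same route as the paper: tightness via the moment estimate from (H4) and the Lyapunov bound, Skorokhod representation, identification of the limit through the martingale problem and the martingale representation theorem, and finally weak uniqueness from Proposition~\ref{prop1}. The only cosmetic differences are that you carry the Brownian motion through the Skorokhod representation (the paper only takes $X_k$ and recovers the martingale $\bar M_k$ by equality in law) and you invoke Kolmogorov--Chentsov where the paper uses a direct Ascoli--Arzel\`a argument; your honest remark about the delicate passage to the limit in the coefficients matches exactly where the paper appeals to (H3), (H4) and Vitali's theorem.
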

\begin{proof}
By H\"older's inequality, BDG's inequality and (H4), there exists a constant $C(\ell,K)>0$ such that
\begin{align}\label{ESofUC}
& E(\sup_{t\in[s,(s+\epsilon)\wedge T]}|X_{k,t}- X_{k,s}|^{2\ell})\\\nonumber
={} &
E\bigg(\sup_{t\in[s,(s+\epsilon)\wedge T]}\bigg|\int_s^t b_k(r,X_{k,r},\mathcal{L}_{X_{k,r}})dr
+ \int_s^t \sigma_k(r,X_{k,r},\mathcal{L}_{X_{k,r}})dW_r \bigg|^{2\ell}\bigg)\\\nonumber
\leq{} &
C(\ell) E\int_s^{(s+\epsilon)\wedge T}|b_k(r,X_{k,r},\mathcal{L}_{X_{k,r}})|^{2\ell}dr\cdot \epsilon^{2\ell-1}\\\nonumber
& +{}
C(\ell)E\bigg(\int_s^{(s+\epsilon)\wedge T}|\sigma_k(r,X_{k,r},\mathcal{L}_{X_{k,r}})|^2 dr\bigg)^{\ell}\\\nonumber
\leq{} &
C(\ell,K) E\int_s^{(s+\epsilon)\wedge T}1+V(X_{k,r},\mathcal{L}_{X_{k,r}})dr\cdot \epsilon^{2\ell-1}\\\nonumber
& +{}
C(\ell)E\bigg(\int_s^{(s+\epsilon)\wedge T}|\sigma_k(r,X_{k,r},\mathcal{L}_{X_{k,r}})|^{2\ell} dr\bigg)\cdot \epsilon^{\ell-1}\\\nonumber
\le{} &
C(\ell,K)\big(1+e^{\lambda T}EV(X_{k,0},\mathcal L_{X_{k,0}})\big)\cdot \epsilon^{\ell}.
\end{align}
Let $n=\left[\frac{T}{\epsilon}\right]+1$ where $[m]$ denotes the integer part of $m\in \mathbb R$. Then we get
\begin{align*}
&E\big(\sup_{s,t\in[0,T],|t-s|\leq \epsilon}| X_{k,t}- X_{k,s}|^{2\ell}\big)\\
\leq{} &
C(\ell)\sum_{j=1}^{n} E\big(\sup_{t\in[(j-1)\epsilon,j\epsilon]}| X_{k,t}-X_{k,(j-1)\epsilon}|^{2\ell}\big)\\
\leq{} &
C(\ell,K)\big(1+e^{\lambda T}EV(X_{k,0},\mathcal L_{X_{k,0}})\big)\cdot(T+\epsilon)\epsilon^{\ell-1}.
\end{align*}
Thus, by Ascoli-Arzela theorem, $\{\mathcal L_{X_k}\}$ is tight on $C[0,T]$,
i.e. there exists a subsequence, still denoted $\{\mathcal L_{X_k}\}$,
which is weakly convergent to some measure $\mu\in\mathcal P(C[0,T])$.
%where $\mathcal P(C[0,T])$ denotes the space of probability measure on $C[0,T]$.
By Skorokhod's representation theorem, there are $C[0,T]$-valued random variables $\bar {X}_k$ and $\bar X$
on some probability space $(\bar \Omega,\bar {\mathcal F}, \bar P)$
such that $\mathcal L_{\bar {X}_k}=\mathcal L_{X_k}, \mathcal L_{\bar X}=\mu$, and
\begin{align}\label{Limit}
\bar {X}_k\to \bar X~a.s.
\end{align}
Note that we have
\begin{align*}
\bar EV(\bar {X}_{k,t},\mathcal L_{\bar {X}_{k,t}})
\le&{}
e^{\lambda t}\bar EV(\bar {X}_{k,0},\mathcal L_{\bar {X}_{k,0}}),\\
\bar EV(\bar {X}_t,\mathcal L_{\bar {X}_t})
\le&{}
e^{\lambda t}\bar EV(\bar {X}_{0},\mathcal L_{\bar {X}_{0}}).
\end{align*}
And by H\"older's inequality, BDG's inequality and (H4), we obtain
\begin{align*}
\bar E\sup_{t\in [0,T]}| \bar {X}_{k,t}|^{2\ell}
={} &
E\sup_{t\in [0,T]}|X_{k,t}|^{2\ell}\\
\le{} &
C(\ell)E(\sup_{t\in [0,T]}| X_{k,t}- X_{k,0}|^{2\ell})+ C(\ell)E|X_{k,0}|^{2\ell}\\
={} &
C(\ell)E\bigg(\sup_{t\in [0,T]}\bigg|\int_0^t b_k(r,X_{k,r},\mathcal{L}_{X_{k,r}})dr\\
& +{} \int_0^t \sigma_k(r,X_{k,r},\mathcal{L}_{X_{k,r}})dW_r \bigg|^{2\ell}\bigg)+ C(\ell)E|X_{k,0}|^{2\ell}\\\nonumber
\leq{} &
C(\ell) E\int_0^T| b_k(r,X_{k,r},\mathcal{L}_{X_{k,r}})|^{2\ell}dr\cdot T^{2\ell-1}\\\nonumber
& +{}
C(\ell)E\bigg(\int_0^T|\sigma_k(r,X_{k,r},\mathcal{L}_{X_{k,r}})|^2 dr\bigg)^{\ell}+ C(\ell)E|X_{k,0}|^{2\ell}\\\nonumber
\leq{} &
C(K,\ell) E\int_0^T1+V(X_{k,r},\mathcal{L}_{X_{k,r}})dr\cdot T^{2\ell-1}\\\nonumber
& +{}
C(\ell)E\int_0^T|\sigma_k(r,X_{k,r},\mathcal{L}_{X_{k,r}})|^{2\ell} dr\cdot T^{\ell-1} + C(\ell)E|X_{k,0}|^{2\ell}\\\nonumber
\le{} &
C(K,\ell)\big(1+e^{\lambda T}EV(X_{k,0},\mathcal L_{X_{k,0}})\big)\cdot (T^{2\ell}+T^{\ell})+ C(\ell)E|X_{k,0}|^{2\ell}\\
<{} &
\infty.
\end{align*}
%Here we have $E|X_{k,0}|^{2}\to E|X_{0}|^{2}$ due to limit \eqref{Cond1} and \cite[Theorem 6.9]{Villani}.
Thus, by limit \eqref{Cond1} and Vitali convergence theorem we get
\begin{align}\label{ESX_k}
\lim_{k\to\infty}\bar E\sup_{t\in [0,T]}|\bar X_{k,t}-\bar X_t|^{2}=0.
\end{align}
If $\bar X$ is a weak solution of equation \eqref{MVSDE}, by the weak uniqueness we have $\mathcal L_{X}=\mathcal L_{\bar X}$ on $C[0,T]$.
Therefore we get
\begin{align}\nonumber
\lim_{k\to\infty}\bar W_2(\mathcal L_{X_k},\mathcal L_X)
=\lim_{k\to\infty}\bar W_2(\mathcal L_{\bar X_k},\mathcal L_{\bar X})
\le \lim_{k\to\infty}\bar E\sup_{t\in [0,T]}|\bar X_{k,t}-\bar X_t|^2
=0,
\end{align}
and the proof is complete.

Now we prove that $\bar X$ is a weak solution for equation \eqref{MVSDE}.
Let $\bar {M}_{k,t}:= \bar {X}_{k,t}- \bar {X}_{k,0}- \int_0^t b_k(r,\bar {X}_{k,r},\mathcal{L}_{\bar {X}_{k,r}})dr$.
By the identical distribution between $\bar {X}_k$ and $X_k$, we have
\begin{align}\label{Mar1}
&\bar E[(\bar {M}_{k,t}-\bar {M}_{k,s})\cdot \Psi(\bar {X}_k|_{[0,s]})]\\\nonumber
= {}&
\bar E\bigg[\bigg(\bar {X}_{k,t}- \bar {X}_{k,s}- \int_s^t b_k(r,\bar {X}_{k,r},\mathcal{L}_{\bar {X}_{k,r}})dr\bigg)\cdot \Psi(\bar {X}_k|_{[0,s]})\bigg]\\\nonumber
= {}&
0,
\end{align}
and
\begin{align}\label{Mar2}
&\bar E\bigg[\bigg(\bar {M}_{k,t}\cdot\bar {M}_{k,t}-\bar {M}_{k,s}\cdot\bar {M}_{k,s}\\\nonumber
&\quad - \int_s^t (\sigma_k\cdot \sigma_k^{\top})(r,\bar {X}_{k,r},\mathcal{L}_{\bar {X}_{k,r}})dr\bigg)\cdot \Psi(\bar {X}_k|_{[0,s]})\bigg]
=0,
\end{align}
for any bounded continuous function $\Psi$ on $C[0,T]$.
That is to say that $\bar {M}_{k}$ is a square-integrable martingale, and its quadratic variation
\begin{equation}\nonumber
\left[\bar {M}_{k}\right]_t= \int_0^t (\sigma_k\cdot \sigma_k^{\top})(r,\bar {X}_{k,r},\mathcal{L}_{\bar {X}_{k,r}})dr.
\end{equation}
By the continuity of coefficient $b_k,b$, we get
\begin{align}\nonumber
\lim_{n\to\infty}|b_k(r,\bar {X}_{k,r},\mathcal{L}_{\bar {X}_{k,r}})-b(r,\bar X_r,\mathcal{L}_{\bar X_r})|=0
\quad \bar P\raisebox{0mm}{-} a.s.
\end{align}
and due to (H4), we have
\begin{align*}
&\bar E\int_0^t|b_k(r,\bar {X}_{k,r},\mathcal{L}_{\bar {X}_{k,r}})-b(r,\bar X_r,\mathcal{L}_{\bar X_r})|^{2\ell}dr\\
\le{} &
C(\ell)\bar E\int_0^t|b_k(r,\bar {X}_{k,r},\mathcal{L}_{\bar {X}_{k,r}})|^{2\ell}
+ |b(r,\bar X_r,\mathcal{L}_{\bar X_r})|^{2\ell}dr\\
\le{} &
C(\ell,K)\int_0^t 1+ \bar EV(\bar {X}_{k,r},\mathcal{L}_{\bar {X}_{k,r}})+\bar EV(\bar X_r,\mathcal{L}_{\bar X_r})dr\\
< {}&\infty.
\end{align*}
Therefore, by Vitali convergence theorem, we obtain
\begin{align}\nonumber
\lim_{n\to\infty}\bar E\int_0^t|b_k(r,\bar {X}_{k,r},\mathcal{L}_{\bar {X}_{k,r}})-b(r,\bar X_r,\mathcal{L}_{\bar X_r})|^{2}dr=0.
\end{align}
In a similar way, we have
\begin{align}\nonumber
\lim_{n\to\infty}\bar E\int_0^t|\sigma_k(r,\bar {X}_{k,r},\mathcal{L}_{\bar {X}_{k,r}})-\sigma(r,\bar X_r,\mathcal{L}_{\bar X_r})|^{2}dr=0.
\end{align}
Therefore, by limit \eqref{ESX_k} let $k\to\infty$ in equality \eqref{Mar1} as well as \eqref{Mar2} and denote $\bar M:=\lim_{k\to\infty}\bar {M}_k$,
we get
\begin{align*}
&\bar E[(\bar M_t-\bar M_s)\cdot \Psi(\bar X|_{[0,s]})]=0,\\
&\bar E\big[\big(\bar M_t\cdot\bar M_t-\bar M_s\cdot\bar M_s- \int_s^t\sigma\cdot \sigma^{\top}(r,\bar X_r,\mathcal{L}_{\bar X_r})dr\big)\cdot \Psi(\bar X|_{[0,s]})\big]
=0.
\end{align*}
By martingale limit theorem (see \cite[Proposition 1.3]{Chung-Williams} for instance),
$\bar M$ is a square integrable martingale, and its quadratic variation
$$
\left[\bar M\right]_t= \int_0^t \sigma\cdot \sigma^{\top}(r,\bar X_r,\mathcal{L}_{\bar X_r})dr.
$$
Therefore, by martingale representation theorem, there exists a Brownian motion $\bar W$
on the probability space $(\bar \Omega,\bar {\mathcal F}, \bar P)$ such that
$$
\bar M_t= \int_0^t \sigma(r,\bar X_r,\mathcal{L}_{\bar X_r})d\bar W_r.
$$
Thus, let $k\to\infty$ in the following SDE:
\begin{align}\label{SMVSDE1}
\left\{
\begin{aligned}
d\bar X_{k,t}
={} &
b_k(t,\bar X_{k,t},\mathcal{L}_{\bar X_{k,t}})dt
 +
d\bar M_{k,t}\\
\bar X_{k,0} = {}& \bar\xi_{k}, \\
\end{aligned}
\right.
\end{align}
we obtain that $(\bar X, \bar W)$ is a weak solution for MVSDE \eqref{MVSDE}.
\end{proof}

\subsection{Continuous dependence of invariant measures}
In this subsection, we investigate the continuous dependence of invariant measures on parameters under distribution-dependent Lyapunov conditions for the following MVSDEs:
\begin{align}\label{EQPIn}
dX_{k,t}= b_k(X_{k,t},\mathcal{L}_{X_{k,t}})dt+ \sigma_k(X_{k,t},\mathcal{L}_{X_{k,t}})dW_t,
\end{align}
where $k=1,2,...$,
\begin{align}\label{EQIn}
dX_t= b(X_t,\mathcal{L}_{X_t})dt+ \sigma(X_t,\mathcal{L}_{X_t})dW_t,
\end{align}
which is obtained by Skorokhod's representation theorem and martingale representation theorem.
Moreover, in this subsection, we do not need condition `$X_{k,0}\to X_0$ in some sense'.
\begin{thm}\label{thm2}
Assume that the conditions of Proposition \ref{prop2} hold with $b,\sigma$ replaced by $b_k,\sigma_k$.
Assume further that
\begin{align}\label{CoeCon1}
\lim_{k\to \infty}|b_k(x, \mu)- b(x, \mu)|+ |\sigma_k(x, \mu)- \sigma(x, \mu)|=0 \quad  \mbox{point-wise}.
\end{align}
%and $b_k,\sigma_k$ are continuous uniformly on $k$.
Then we have
$$\mu_k\to \mu$$
where $\mu_k, \mu$ denote invariant measures for equations \eqref{EQPIn} and \eqref{EQIn}, respectively.
\end{thm}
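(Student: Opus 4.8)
The plan is to adapt the proof of Theorem \ref{thm1}, working with \emph{stationary} solutions in place of solutions with prescribed initial data. For each $k$, let $X_{k,\cdot}$ be the stationary solution of \eqref{EQPIn} attached to the invariant measure $\mu_k$, so that $\mathcal L_{X_{k,t}}=\mu_k$ for all $t\ge 0$, and let $X_{\cdot}$ be a stationary solution of \eqref{EQIn} with $\mathcal L_{X_t}=\mu$. The goal is to show every subsequence of $\{\mu_k\}$ has a further subsequence converging weakly to $\mu$.

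The first step, which I expect to be the crux, is a uniform-in-$k$ a priori estimate: $\sup_k\int_{\R^d}V(x,\mu_k)\,\mu_k(dx)<\infty$ together with $\sup_k\int_{\R^d}|x|^2\,\mu_k(dx)<\infty$. I would derive it from the Lyapunov condition (H2') — taking the Lyapunov function $V$ and the constant $\gamma$ independent of $k$ — together with the L\'evy--Prohorov contraction hypothesis built into Proposition \ref{prop2}: from the fixed-point identity $\mu_k=(P^{(k)}_{t_0})^{*}\mu_k$ and a comparison with $(P^{(k)}_{t_0})^{*}\delta_0$ one gets $\omega(\mu_k,\delta_0)\le(1-C(t_0))^{-1}\,\omega\big((P^{(k)}_{t_0})^{*}\delta_0,\delta_0\big)$, and the right-hand side is controlled by a finite-time moment estimate for the process started at $0$, which is available from the growth bound (H4) on $[0,t_0]$; the Lyapunov structure then promotes this to the stated bounds. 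Once these hold, the coercivity $V_R\to\infty$ in (H2') makes $\{\mu_k\}$ tight on $\mathcal P(\R^d)$.

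With the a priori bounds in hand, the rest follows the pattern of Theorem \ref{thm1}. Repeating the estimate \eqref{ESofUC} with $(b_k,\sigma_k)$ in place of $(b,\sigma)$ and $\mu_k$ in place of $\mathcal L_{X_{k,0}}$ yields a uniform modulus-of-continuity bound for the stationary processes on every interval $[0,T]$, so $\{\mathcal L_{X_k}\}$ is tight on $C[0,T]$ by Ascoli--Arzel\`a. Passing to a subsequence and applying Skorokhod's representation theorem furnishes processes $\bar X_k\to\bar X$ a.s. on $C[0,T]$ on a common space $(\bar\Omega,\bar{\mathcal F},\bar P)$ with $\mathcal L_{\bar X_k}=\mathcal L_{X_k}$; since each $\bar X_{k,t}$ has the $t$-independent law $\mu_k$ and, along a further subsequence, $\mu_k\to\nu$ weakly, the limit $\bar X$ is stationary with $\mathcal L_{\bar X_t}\equiv\nu$, and the uniform moment bounds together with a.s. convergence give $\bar E\sup_{t\in[0,T]}|\bar X_{k,t}-\bar X_t|^2\to 0$ exactly as for \eqref{ESX_k}.

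Finally I would run the martingale-problem argument of Theorem \ref{thm1} verbatim: $\bar M_{k,t}:=\bar X_{k,t}-\bar X_{k,0}-\int_0^t b_k(\bar X_{k,r},\mathcal L_{\bar X_{k,r}})\,dr$ is a square-integrable martingale with quadratic variation $\int_0^t(\sigma_k\sigma_k^{\top})(\bar X_{k,r},\mathcal L_{\bar X_{k,r}})\,dr$; the point-wise convergence \eqref{CoeCon1}, the bound (H4) and Vitali's theorem allow passage to the limit, and the martingale representation theorem then produces a Brownian motion $\bar W$ on $(\bar\Omega,\bar{\mathcal F},\bar P)$ for which $(\bar X,\bar W)$ solves \eqref{EQIn}. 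Stationarity of $\bar X$ says $\nu$ is an invariant measure of \eqref{EQIn}, so the uniqueness assertion of Proposition \ref{prop2} forces $\nu=\mu$; since every subsequential weak limit of the tight family $\{\mu_k\}$ equals $\mu$, the whole sequence converges, $\mu_k\to\mu$ (and the estimate $\bar E\sup_t|\bar X_{k,t}-\bar X_t|^2\to 0$ even gives $W_2(\mu_k,\mu)\to 0$). The only genuinely new difficulty relative to Theorem \ref{thm1} is the uniform a priori bound of the first step; should (H2') as stated prove too weak to deliver it directly, one would strengthen it to a dissipative form such as $(\mathcal L V)(x,\mu)\le\beta-\gamma V(x,\mu)$ or add a uniform moment hypothesis on the family $(b_k,\sigma_k)$.
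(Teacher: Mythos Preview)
Your overall strategy---tightness of $\{\mu_k\}$, Skorokhod representation on $C[0,T]$, martingale-problem identification of the limit, and uniqueness of the invariant measure---matches the paper's, but the tightness step is handled differently. You work with \emph{stationary} solutions and make the uniform a priori bound $\sup_k\int V(x,\mu_k)\,\mu_k(dx)<\infty$ the crux. The paper instead runs the solutions $X_{k,\cdot}$ of \eqref{pMVSDE1} from a fixed (or uniformly $V$-controlled) initial datum $\xi_k$: under (H2') one has $(\mathcal LV)\le 0$, so $t\mapsto EV(X_{k,t},\mathcal L_{X_{k,t}})$ is non-increasing and $\sup_{k,t\ge 0}EV(X_{k,t},\cdot)\le\sup_kEV(\xi_k,\mathcal L_{\xi_k})<\infty$ comes for free. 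This gives tightness of the marginals $\{\mathcal L_{X_{k,t}}\}_{k,t}$ on $\R^d$; since Proposition~\ref{prop2} provides $\mathcal L_{X_{k,t}}\to\mu_k$ weakly as $t\to\infty$, the Portmanteau bound $\mu_k(K_\epsilon)\ge\limsup_{t\to\infty}\mathcal L_{X_{k,t}}(K_\epsilon)>1-\epsilon$ (via \cite[Theorem~2.1]{Billingsley}) transfers tightness to $\{\mu_k\}$. Your crux is thus bypassed entirely.

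Your proposed derivation of the uniform bound via the L\'evy--Prohorov contraction has a genuine gap: the inequality $\omega(\mu_k,\delta_0)\le(1-C(t_0))^{-1}\omega\big((P^{(k)}_{t_0})^{*}\delta_0,\delta_0\big)$ is correct, but a bound on the L\'evy--Prohorov distance to $\delta_0$ (always $\le 1$) carries no moment information, so the line ``the Lyapunov structure then promotes this to the stated bounds'' is exactly where the argument is missing. That said, your route---\emph{if} the uniform $V$-bound can be supplied---buys more than the paper obtains: because your limiting process $\bar X$ is stationary with marginal $\mu$, the $L^2$ convergence on $C[0,T]$ would yield $W_2(\mu_k,\mu)\to 0$, which the paper explicitly leaves open in the remark following the theorem.
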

\begin{proof}
By It\^o's formula and (H6), we have
\begin{align*}
EV(X_{k,t}, \mathcal L_{X_{k,t}})
& ={}
EV(X_{k,0}, \mathcal L_{X_{k,0}})+ E\int_0^t(\mathcal LV)(X_{k,s}, \mathcal L_{X_{k,s}})ds\\
& \le{}
EV(X_{k,0}, \mathcal L_{X_{k,0}})- \gamma t,
\end{align*}
where $X_{k}$ denotes the solution of the following MVSDE:
\begin{align}
\left\{
\begin{aligned}\label{pMVSDE1}
dX_{k,t}&= b_k(X_{k,t},\mathcal{L}_{X_{k,t}})dt+ \sigma_k(X_{k,t},\mathcal{L}_{X_{k,t}})dW_t,\\
X_{k,0}&= \xi_k.
\end{aligned}
\right.
\end{align}
By Gronwall's inequality, we get
$$EV(X_{k,t}, \mathcal L_{X_{k,t}})
\le
EV(X_{k,0}, \mathcal L_{X_{k,0}})e^{- \gamma t}
\le
EV(X_{k,0}, \mathcal L_{X_{k,0}})$$
for any $t\ge 0$.
Note that we have
\begin{align*}
EV(X_{k,t}, \mathcal L_{X_{k,t}})
\ge&
P(V(X_{k,t},\mathcal L_{X_{k,t}})>R)\cdot R\\
=&
P(|X_{k,t}|\vee |\mathcal L_{X_{k,t}}|_2>N)\cdot R
\ge
P(|X_{k,t}|>N)\cdot R,
\end{align*}
where constant $N$ only depends on $R$ with $N\to\infty$ as $R\to\infty$.
Thus, we obtain
$$P(|X_{k,t}|>N)
\le \frac{EV(X_{k,0}, \mathcal L_{X_{k,0}})}{R}.$$
That is to say that $\mathcal L_{X_{k,t}}$ is tight,
i.e. for any $\epsilon>0$, there exists a compact set $K_{\epsilon}\subset \mathbb R^d$ uniformly on $[0,\infty)$,
such that $\mathcal L_{X_{k,t}}(K_{\epsilon})>1-\epsilon$ for any $k$.
%And by, we obtain that $\mathcal L_{X_{k,t}}$ weakly converges to $\mu_k$ as $t\to\infty$.
Applying Proposition \ref{prop2} and \cite[Theorem 2.1]{Billingsley}, we have
$$\mu_k(K_{\epsilon})
\ge \limsup_{t\to\infty}\mathcal L_{X_{k,t}}(K_{\epsilon})
> 1-\epsilon.$$
Therefore, $\{\mu_k\}$ is tight.
So there is a subsequence, still denoted $\{\mu_k\}$,
which is weakly convergent to some measure $\bar \mu$.
And similar to the proof of Theorem \ref{thm1}, we have $\bar\mu=\mu$.
Therefore, the proof is complete.
\end{proof}

\begin{rem}
By Theorem \ref{thm1}, it seems that we could obtain $\lim_{k\to\infty}W_2(\mu_k,\mu)=0.$
However, it may not hold since we can only get $X_{k}\to X ~in ~W_2~on~C[0,T]$ for any $T>0$.
If we can get the $X_{k}\to X ~in ~W_2~on~C[0,\infty)$, this result directly follows.
\end{rem}

\iffalse
As a direct consequence, we have the following corollary.
\begin{cor}\label{cor2}
Assume that the conditions of Theorem \ref{thm2} hold, then we have
$$\lim_{k\to\infty}W_2(\mu_k,\mu)=0.$$
\end{cor}
\begin{proof}
By Theorem \ref{thm2} and Skorokhod's representation theorem, there exist random variables $X_{k,0},X_0$ on some probability space $(\bar {\Omega}, \bar{\mathcal F},\bar P)$ such that $\mathcal L_{X_{k,0}}=\mu_k,\mathcal L_{X_{0}}=\mu$ and
$$X_{k,0}\to X_0 \quad a.s.$$
Denote $X_{k,t},X_t$ by the solutions of equations \eqref{EQPIn} and \eqref{EQIn} with initial values $X_{k,0},X_0$, respectively.
Therefore, by the continuity of $W_2$ (see Corollary 6.11 for instance) and Corollary \ref{cor1}, we obtain
$$W_2(\mu_k,\mu)^2=\lim_{t\to\infty}W_2(\mathcal L_{X_{k,t}},\mathcal L_{X_{t}})^2
\le \bar W_2(\mathcal L_{X_{k}},\mathcal L_{X})^2\to 0.$$
\end{proof}
\fi
\subsection{Counterexample}
For the classical SDEs,
if the initial value is convergent in probability,
the corresponding solution is convergent in probability uniformly on $[0,T]$.
However, this result does not hold for MVSDEs since the metric between measures does not match.
That is to say that the distance,
under which the coefficients are continuous with respect to the distribution variables,
does not match the distance between initial values.
Therefore, we utilize the property, which is that the convergence in probability does not imply the convergence in $L^2$,
to give a counterexample to illustrate this situation.
\begin{exam}
For any $k=1,2,...$, let $X_0, X_{k,0}$ be random variables satisfying $X_{k,0}\ge X_0, X_{k,0}\neq X_0~a.s.$,
and $X_{k,0}\to X_0$ in probability but not in $L^p$ for any $p>0$.
And denote $X_k, X$ by the solutions of the following MVSDEs with initial values $X_{k,0},X_0$, respectively:
\begin{align}
dX_{k,t}= \int_{\mathbb R^d}y\mathcal L_{{k,t}}(dy)dt+ \sqrt 2 dW_t,
\end{align}
and
\begin{align}
dX_t= \int_{\mathbb R^d}y\mathcal L_{X_t}(dy)dt+ \sqrt 2 dW_t.
\end{align}
Then $X_k$ does not converge to $X$ in probability.
\end{exam}

\begin{proof}
Now we construct two sequences of stochastic processes $\{X^{(n)}_k\}$ and $\{X^{(n)}\}$ as follows:
$$
X_{k,t}^{(0)}= X_{k,0}, X^{(0)}_t= X_{0}, ~t\geq 0,
$$
for any $n\geq 1$,
\begin{equation}
 \left\{
  \begin{aligned}
    &dX^{(n)}_{k,t}= \int_{\mathbb R^d}y\mathcal L_{X^{(n-1)}_{k,t}}(dy)dt+ \sqrt 2 dW_t,\\
    &X^{(n)}_{k,0}= X_{k,0},
\end{aligned}
 \right.
\end{equation}
and
\begin{equation}
 \left\{
  \begin{aligned}
    &dX^{(n)}_k= \int_{\mathbb R^d}y\mathcal L_{X^{(n-1)}_t}(dy)dt+ \sqrt 2 dW_t,\\
    &X^{(n)}_0= X_{0},
\end{aligned}
 \right.
\end{equation}
By Wang \cite[Lemma 2.3]{Wang_18}, we have
\begin{align}\label{SolConW1}
\lim_{n\to \infty}E\sup_{0\leq t\leq T}|X_{k,t}- X^{(n)}_{k,t}|^2=0,\\ \nonumber
\lim_{n\to \infty}E\sup_{0\leq t\leq T}|X_t- X^{(n)}_t|^2=0.
\end{align}
By the definition of $\{X^{(n)}_k\}$ and $\{X^{(n)}\}$, we get
\begin{align*}
X^{(1)}_{k,t}- X^{(1)}_t&= X_{k,0}-X_0 + \int_0^t E(X^{(0)}_{k,s}-X^{(0)}_s)ds= X_{k,0}-X_0+ E(X_{k,0}-X_0)t,\\
X^{(2)}_{k,t}- X^{(2)}_t&= X_{k,0}-X_0 + \int_0^t E(X^{(1)}_{k,s}- X^{(1)}_s)ds\\
                         &= X_{k,0}-X_0+ \int_0^t (E(X_{k,0}-X_0)+ E(X_{k,0}-X_0)s) ds\\
                         &= X_{k,0}-X_0+ E(X_{k,0}-X_0)t+ \frac 12E(X_{k,0}-X_0) t^2,\\
...\\
X^{(n)}_{k,t}- X^{(n)}_t&= X_{k,0}-X_0+ \int_0^t E(X^{(n-1)}_{k,s}-X^{(n-1)}_s)ds\\
            &= X_{k,0}-X_0+ E(X_{k,0}-X_0)t+ \frac 12 E(X_{k,0}-X_0)t^2+... + \frac {1}{n!} E(X_{k,0}-X_0)t^n.
\end{align*}
Note that we have
\begin{align}\label{INeP}
P(|X^{(n)}_k- X^{(n)}|>\epsilon)
\leq &
P(|X^{(n)}_k- X_k|> \frac \epsilon3)+ P(|X_k- X|>\frac \epsilon3)
+
P(|X- X^{(n)}|>\frac \epsilon3),
\end{align}
for any $\epsilon>0$.
Since the convergence in $L^p$ implies the convergence in probability,
by two equalities in the right side of \eqref{SolConW1} we obtain
\begin{align*}
\lim_{n\to \infty}P(\sup_{0\le t\le T}|X^{(n)}_{k,t}- X_{k,t}|> \frac \epsilon3)=0,\\
\lim_{n\to \infty}P(\sup_{0\le t\le T}|X_t- X^{(n)}_t|>\frac \epsilon3)=0.
\end{align*}
Let $n\to \infty$ then $k\to \infty$ in inequality \eqref{INeP},
we get
\begin{align}\nonumber
\lim_{k\to \infty}P(|X_{k,t}- X_t|>\frac \epsilon3)
& \geq
\lim_{k\to \infty}\lim_{n\to \infty}P(|X^{(n)}_{k,t}- X^{(n)}_t|>\epsilon)=1,
\end{align}
where the last equality holds for some sufficiently small constant $\epsilon>0$ and $t\neq 0$.
The proof is complete.
\end{proof}

\iffalse
\begin{rem}
We construct a sequence $\{X_k\}_k$ which is convergent in probability but not in $L^p$ with $p\ge 1$.
Let $m$ be the Lebesgue measure, $(\Omega, \mathcal F, P)= ([0,1], \mathcal B([0,1]), m)$, $X_0=0$, and for each $k=1,2,...$
\begin{equation}
X_{0,k}(\omega)=\left\{
          \begin{aligned}
          k \quad &&\omega \in [0, \frac 1k],\\
          0 \quad &&else.\\
\end{aligned}
\right.
\end{equation}
Then we have for sufficiently small $\epsilon >0$
$$
P(|X_{0,k}-X_0|>\epsilon)= P(X_{0,k}>\epsilon)= \frac 1k \to 0 \quad as \quad k\to \infty,
$$
and
$$
E|X_{0,k}-X_0|= E|X_{0,k}|= 1,
$$
i.e. $X_{0,k}$ converges to $X_0$ in probability but not in $L^p$ for each $p\geq 1$.
\end{rem}
\fi

However convergence in probability for initial values does not
imply that the corresponding solutions converge in probability under Lipschitz condition by the above counterexample,
we obtain that solutions inherit the $L^2$-convergence of initial values,
which is showed by the following proposition.
And we think that the proof is simple, so we put it in Appendix.
\begin{prop}\label{Thm1}
Suppose that the coefficients $b,\sigma,b_k,\sigma_k$ of equations \eqref{MVSDE} and \eqref{pMVSDE} are Lipschitz with Lipschitz constant $L$,
and satisfy the linear growth conditions with linear growth constant $K$ for all $k=1,2,...$.
Assume further that
\begin{align}\label{IniCon1}
\lim_{k\to \infty}E(|X_{k,0}-X_0|^2)=0,
\end{align}
and
\begin{align}\label{CoeCon1}
\lim_{k\to \infty}|b_k(t, x, \mu)- b(t, x, \mu)|+ |\sigma_k(t, x, \mu)- \sigma(t, x, \mu)|=0 \quad  \mbox{point-wise}.
\end{align}
Then we have
\begin{align}\label{1Con_2}
\lim_{k\to \infty}E\sup_{0\leq t\leq T}|X_{k,t}- X_t|^2=0.
\end{align}
\end{prop}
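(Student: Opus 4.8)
The plan is to run the classical Gronwall scheme for McKean--Vlasov SDEs, taking care to keep the ``coefficient error'' separated from the ``state error''.

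\textbf{Step 1 (uniform moment bounds).} First I would establish $\sup_{k\ge 1}E\sup_{0\le t\le T}|X_{k,t}|^2<\infty$ and $E\sup_{0\le t\le T}|X_t|^2<\infty$. Writing $X_{k,t}=X_{k,0}+\int_0^t b_k(s,X_{k,s},\mathcal L_{X_{k,s}})\,ds+\int_0^t\sigma_k(s,X_{k,s},\mathcal L_{X_{k,s}})\,dW_s$, applying BDG's inequality to the martingale part, Cauchy--Schwarz to the drift part, and the linear growth condition with constant $K$ (noting $W_2(\mathcal L_{X_{k,s}},\delta_0)^2\le E|X_{k,s}|^2$), one gets $E\sup_{0\le r\le t}|X_{k,r}|^2\le C(1+E|X_{k,0}|^2)+C\int_0^t E\sup_{0\le u\le s}|X_{k,u}|^2\,ds$, and Gronwall closes it. The uniformity in $k$ comes from \eqref{IniCon1}, which forces $\sup_k E|X_{k,0}|^2<\infty$. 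These bounds serve two purposes below: they provide an integrable dominating function, and they guarantee all Itô integrals involved are genuine square-integrable martingales.

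\textbf{Step 2 (splitting the difference).} Subtracting the two SDEs,
\begin{align*}
X_{k,t}-X_t=(X_{k,0}-X_0)+\int_0^t\big(b_k(s,X_{k,s},\mathcal L_{X_{k,s}})-b(s,X_s,\mathcal L_{X_s})\big)\,ds+\int_0^t\big(\sigma_k(s,X_{k,s},\mathcal L_{X_{k,s}})-\sigma(s,X_s,\mathcal L_{X_s})\big)\,dW_s.
\end{align*}
In each integrand I insert and subtract $b_k(s,X_s,\mathcal L_{X_s})$ (resp. $\sigma_k(s,X_s,\mathcal L_{X_s})$). The first piece $b_k(s,X_{k,s},\mathcal L_{X_{k,s}})-b_k(s,X_s,\mathcal L_{X_s})$ is bounded by $L\big(|X_{k,s}-X_s|+W_2(\mathcal L_{X_{k,s}},\mathcal L_{X_s})\big)$ by the Lipschitz hypothesis, and using the trivial coupling $(X_{k,s},X_s)$ one has $W_2(\mathcal L_{X_{k,s}},\mathcal L_{X_s})^2\le E|X_{k,s}-X_s|^2$. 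The second piece $b_k(s,X_s,\mathcal L_{X_s})-b(s,X_s,\mathcal L_{X_s})$ is the coefficient error; likewise for $\sigma$.

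\textbf{Step 3 (coefficient error vanishes).} By \eqref{CoeCon1} the integrand $|b_k(s,X_s(\omega),\mathcal L_{X_s})-b(s,X_s(\omega),\mathcal L_{X_s})|^2$ tends to $0$ for a.e. $(s,\omega)\in[0,T]\times\Omega$, and by linear growth of $b_k$ and $b$ it is dominated by $C(1+|X_s(\omega)|^2)$, which is integrable on $[0,T]\times\Omega$ by Step 1. Hence by dominated convergence $\var_k:=E\int_0^T\big(|b_k-b|^2+|\sigma_k-\sigma|^2\big)(s,X_s,\mathcal L_{X_s})\,ds\to 0$.

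\textbf{Step 4 (Gronwall).} Put $\varphi_k(t):=E\sup_{0\le r\le t}|X_{k,r}-X_r|^2$. Taking $E\sup_{0\le r\le t}$ in the identity of Step 2, applying BDG to the martingale part and Cauchy--Schwarz to the drift, and using Steps 2--3 together with $W_2(\mathcal L_{X_{k,s}},\mathcal L_{X_s})^2\le\varphi_k(s)$, one obtains $\varphi_k(t)\le C\big(E|X_{k,0}-X_0|^2+\var_k\big)+C\int_0^t\varphi_k(s)\,ds$ for a constant $C=C(L,T)$. Gronwall's inequality then gives $\varphi_k(T)\le C\big(E|X_{k,0}-X_0|^2+\var_k\big)e^{CT}$, which tends to $0$ by \eqref{IniCon1} and Step 3; this is exactly \eqref{1Con_2}.

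The main obstacle is the distribution dependence: one must ensure the Wasserstein contribution is absorbed into the same Gronwall functional rather than producing an uncontrolled term. This works precisely because $W_2(\mathcal L_{X_{k,s}},\mathcal L_{X_s})^2\le E|X_{k,s}-X_s|^2\le\varphi_k(s)$, so the measure term is dominated by the state term and the Gronwall loop still closes; every other step is the standard $L^2$ SDE estimate. (The uniform-in-$k$ moment control of Step 1 is the only other place where some care is needed, since it underpins the dominated-convergence argument in Step 3.)
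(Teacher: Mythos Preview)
Your proof is correct and follows essentially the same route as the paper: split the difference by inserting $b_k(s,X_s,\mathcal L_{X_s})$ and $\sigma_k(s,X_s,\mathcal L_{X_s})$, control the Lipschitz part via $W_2(\mathcal L_{X_{k,s}},\mathcal L_{X_s})^2\le E|X_{k,s}-X_s|^2$, handle the coefficient error by dominated convergence using linear growth and the finiteness of $E\sup_{s\le T}|X_s|^2$, and close with Gronwall. The only cosmetic difference is that the paper packages the initial-value error and the coefficient error together into a single term $\eta_{k,t}$ before applying Gronwall, whereas you keep them as separate summands; also, your Step~1 proves a uniform-in-$k$ moment bound that is slightly more than needed (only the bound on the limiting process $X$ is used in the dominated-convergence step, which the paper simply cites from the well-posedness literature).
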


\begin{rem}
By the above Proposition, we directly obtain \cite[Proposition 5.2]{Qiao} and \cite[Theorem 5.1 and Theorem 6.1]{BMM}.
%$$\lim_{k\to\infty}\sup_{0\leq t\leq T}E|X_{k,t}- X_t|^2=0,$$
%This result is stronger than Theorem 3.3 or
\end{rem}

As a direct consequence of Proposition \ref{Thm1}, we have the following corollary.
\begin{cor}\label{Cor1}
Assume that the conditions of Proposition \ref{Thm1} hold with \eqref{IniCon1} replaced by
\begin{align}\nonumber
\lim_{k\to\infty}W_2(\mathcal L_{X_{k,0}}, \mathcal L_{X_0})=0.
\end{align}
Then we have
\begin{align}\nonumber
\lim_{k\to\infty}\bar W_2(\mathcal L_{X_k}, \mathcal L_{X})=0.
\end{align}
\end{cor}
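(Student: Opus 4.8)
The plan is to reduce the convergence in $\bar W_2$ to the $L^2$-convergence established in Proposition \ref{Thm1}, using a coupling argument. First I would invoke Skorokhod's representation theorem: since $W_2(\mathcal L_{X_{k,0}},\mathcal L_{X_0})\to 0$, there exist random variables $\tilde\xi_k,\tilde\xi$ on a common probability space $(\tilde\Omega,\tilde{\mathcal F},\tilde P)$ with $\mathcal L_{\tilde\xi_k}=\mathcal L_{X_{k,0}}$, $\mathcal L_{\tilde\xi}=\mathcal L_{X_0}$, and $\tilde\xi_k\to\tilde\xi$ almost surely; moreover, by a standard refinement of Skorokhod's theorem for $W_2$-convergence (uniform integrability of $|\tilde\xi_k|^2$), one gets $\tilde E|\tilde\xi_k-\tilde\xi|^2\to 0$. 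Enlarging the space to carry an independent Brownian motion $\tilde W$, I would then let $\tilde X_k,\tilde X$ be the solutions of \eqref{pMVSDE} and \eqref{MVSDE} driven by $\tilde W$ with initial data $\tilde\xi_k,\tilde\xi$. These solutions have the same laws on $C[0,T]$ as $X_k,X$ respectively, because the law of a solution to an MVSDE depends only on the law of the initial datum (weak uniqueness under the Lipschitz hypotheses) — this is exactly where I would need to be slightly careful.

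Next I would apply Proposition \ref{Thm1} on this common space: the hypothesis \eqref{IniCon1} now holds for $\tilde\xi_k,\tilde\xi$ since $\tilde E|\tilde\xi_k-\tilde\xi|^2\to 0$, and all the other hypotheses (Lipschitz and linear growth for $b,\sigma,b_k,\sigma_k$, point-wise convergence \eqref{CoeCon1}) are unchanged because they concern only the coefficients. Hence $\tilde E\sup_{0\le t\le T}|\tilde X_{k,t}-\tilde X_t|^2\to 0$. Finally, since the pair $(\tilde X_k,\tilde X)$ furnishes a coupling of $\mathcal L_{X_k}$ and $\mathcal L_X$ on $C[0,T]$, by the definition of $\bar W_2$ we get
\begin{align*}
\bar W_2(\mathcal L_{X_k},\mathcal L_X)^2
\le \tilde E\,\|\tilde X_k-\tilde X\|_{C[0,T]}^2
= \tilde E\sup_{0\le t\le T}|\tilde X_{k,t}-\tilde X_t|^2
\longrightarrow 0,
\end{align*}
which is the desired conclusion.

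The main obstacle I anticipate is the step identifying the law of the Skorokhod-constructed solution $\tilde X_k$ (resp. $\tilde X$) with $\mathcal L_{X_k}$ (resp. $\mathcal L_X$) on $C[0,T]$. One must argue that under the Lipschitz and linear growth conditions the MVSDE \eqref{MVSDE} has a unique strong solution whose path law is a measurable function of $\mathcal L_{X_0}$ alone — so that changing the probability space and the particular representative of the initial law, while keeping an independent Brownian driver, does not change the law of the trajectory. This is standard (it follows from strong well-posedness plus the fact that the fixed-point map defining the McKean--Vlasov solution only sees the flow of marginal laws), but it should be stated explicitly. A secondary technical point is the upgrade of Skorokhod's a.s.-convergence to $L^2$-convergence of the initial data, which requires $W_2$-convergence (not merely weak convergence) and hence uniform integrability of $\{|\tilde\xi_k|^2\}$; this is precisely what the hypothesis $W_2(\mathcal L_{X_{k,0}},\mathcal L_{X_0})\to 0$ provides, via the characterization of $W_2$-convergence as weak convergence together with convergence of second moments.
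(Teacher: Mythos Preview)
Your proposal is correct and complete; the route differs only in how the coupling of initial data is built. The paper does not invoke Skorokhod's theorem: instead, for each fixed $k$ it takes an \emph{optimal} $W_2$-coupling $(\tilde X_{k,0},\tilde X_0^{(k)})$ of $(\mathcal L_{X_{k,0}},\mathcal L_{X_0})$, so that $E|\tilde X_{k,0}-\tilde X_0^{(k)}|^2 = W_2(\mathcal L_{X_{k,0}},\mathcal L_{X_0})^2$ exactly, then runs the two equations from these paired initial values and bounds $\bar W_2$ by the resulting $L^2$-difference. This avoids your Skorokhod-plus-uniform-integrability upgrade, at the price of letting the representative of the limiting law depend on $k$ (the paper's remark after the corollary makes precisely this point). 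Your construction, by contrast, produces a single target $\tilde\xi$ and lets you apply Proposition~\ref{Thm1} verbatim; the paper's version, strictly speaking, re-runs the Gronwall estimate of Proposition~\ref{Thm1} with a $k$-dependent ``$X_0$'', which is harmless since only $\mathcal L_{X_0}$ enters the bounds. Both arguments conclude via the same coupling inequality $\bar W_2(\mathcal L_{X_k},\mathcal L_X)^2\le E\sup_{t\le T}|\cdot|^2$ and the same appeal to weak uniqueness to identify the path laws on the auxiliary space with $\mathcal L_{X_k}$ and $\mathcal L_X$.
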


\begin{proof}
For each $k\ge1$, there exist random variables $\tilde X_{k,0}, \tilde X_0^{(k)}$ such that
$\mathcal L_{X_{k,0}}= \mathcal L_{\tilde X_{k,0}}$, $\mathcal L_{X_0}= \mathcal L_{\tilde X_0^{(k)}}$ and
\begin{align}\nonumber
W_2(\mathcal L_{X_{k,0}}, \mathcal L_{X_0})^2= E|\tilde X_{k,0}- \tilde X_0^{(k)}|^2.
\end{align}
Let $\tilde X_{k,t},\tilde X^{k}_t$ be the solutions of MVSDEs \eqref{MVSDE} and \eqref{pMVSDE} with initial value $\tilde X_{k,0},\tilde X_{0}^k$ respectively.
By Proposition \ref{Thm1}, we have
\begin{align}\nonumber
\lim_{k\to\infty}E\sup_{0\leq s\leq T}|\tilde X_{k,s}- \tilde X^{(k)}_s|^2=0.
\end{align}
Therefore, by the weak uniqueness we obtain
\begin{align}\nonumber
\bar W_2(\mathcal L_{X_k}, \mathcal L_{X})^2
=
\bar W_2(\mathcal L_{\tilde X_k}, \mathcal L_{\tilde X^{(k)}})^2
\le
E\sup_{0\leq t\leq T}|\tilde X_{k,t}-\tilde X^{(k)}_t|^2\to0.
\end{align}
The proof is complete.
\end{proof}

\begin{rem}
We know that the optimal coupling is attained, but the optimal one depend on $k$
and we could not find a random variable $\tilde X_0$
such that $W_2(\mathcal L_{X_{k,0}}, \mathcal L_{X_0})^2= E|\tilde X_{k,0}- \tilde X_0|^2$ holds for every $k=1,2,...$
\end{rem}

\begin{exam}
(Small `distribution' limit).
In particular, the random trajectory of the MVSDE
\begin{equation*}
   \left\{
   \begin{aligned}
   &dX^{\epsilon}_t= b_{\epsilon}(X^{\epsilon}_t, \mathcal L_{X^{\epsilon}_t})dt+ \sigma_{\epsilon}(X^{\epsilon}_t, \mathcal L_{X^{\epsilon}_t})dW_t\\
   &X^{\epsilon}_0=\xi
   \end{aligned}
   \right.
\end{equation*}
converge in $L^2$ uniformly on $[0,T]$ as $\epsilon \to 0$ to the trajectory of
\begin{equation*}
   \left\{
   \begin{aligned}
   &dX_t= b(X_t)dt+\sigma(X_t)dW_t\\
   &X_0=\xi,
   \end{aligned}
   \right.
\end{equation*}
where $\lim_{\epsilon \to 0}b_{\epsilon}(x,\mu)=b(x),\lim_{\epsilon \to 0}\sigma_{\epsilon}(x,\mu)=\sigma(x)$ for each $x\in\mathbb R^d, \mu\in\mathcal P(\mathbb R^d)$.
\end{exam}

We next give the continuous dependence of invariant measures on parameters without the dependence on initial values.
The proof is also put in Appendix.
\begin{prop}\label{Thm3}
Suppose that the conditions of Proposition \ref{Thm1} hold except for the condition \eqref{IniCon1},
and for the Lipschitz conditions of coefficients $b,b_k$ replaced by the following conditions:
there exist nonnegative constants $L_1,L_2$  with $2L_1-2L_2- 8L^2>0$ such that
\begin{align}\nonumber
\langle x-y, b(x,\mu)- b(y,\nu)\rangle\leq -L_1|x-y|^2+ L_2|x-y|W_2(\mu,\nu),
\end{align}
for any $x,y\in\mathbb R^d,\mu,\nu\in\mathcal P_2(\mathbb R^d)$.
Assume further that the coefficients $b,b_k,\sigma,\sigma_k$ are independent of $t$.
Then we have
\begin{align}\label{3Con_1}
\lim_{k\to \infty}W_2(\mu_k,\mu)=0,
\end{align}
where $\mu_k,\mu$ are the unique invariant measures of MVSDEs \eqref{EQPIn} and \eqref{EQIn}, respevtively.
\end{prop}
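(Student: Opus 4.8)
The plan is to combine the contractivity of the invariant measures in the Wasserstein metric (which comes from the strengthened dissipativity condition on $b,b_k$) with the $L^2$-convergence of solutions on finite time intervals established in Proposition \ref{Thm1}. First I would fix a reference random variable $\xi\in L^2(\Omega,\mathcal F_0,P)$ and let $X_t$ (resp. $X_{k,t}$) denote the solution of \eqref{EQIn} (resp. \eqref{EQPIn}) started from $\xi$. Using It\^o's formula applied to $|X_t-Y_t|^2$ for two solutions $X,Y$ of the \emph{same} equation \eqref{EQIn} with different initial data, together with the monotonicity hypothesis $\langle x-y,b(x,\mu)-b(y,\nu)\rangle\le -L_1|x-y|^2+L_2|x-y|W_2(\mu,\nu)$, the Lipschitz bound $|\sigma(x,\mu)-\sigma(y,\nu)|^2\le 2L^2(|x-y|^2+W_2(\mu,\nu)^2)$, the elementary inequality $W_2(\mathcal L_{X_t},\mathcal L_{Y_t})^2\le E|X_t-Y_t|^2$, and Young's inequality on the cross term $L_2|x-y|W_2(\mu,\nu)$, one gets
\begin{align*}
\frac{d}{dt}E|X_t-Y_t|^2 \le -(2L_1-2L_2-8L^2)\,E|X_t-Y_t|^2,
\end{align*}
so that $E|X_t-Y_t|^2\le e^{-c t}E|X_0-Y_0|^2$ with $c:=2L_1-2L_2-8L^2>0$; the same computation works verbatim for \eqref{EQPIn} with a constant $c_k$, and since the structural constants $L_1,L_2,L$ are uniform in $k$ (the hypotheses of Proposition \ref{Thm1} hold for all $k$), the contraction rate $c_k\ge c$ is uniform in $k$. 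In particular $\mathcal L_{X_t}\to\mu$ and $\mathcal L_{X_{k,t}}\to\mu_k$ in $W_2$ as $t\to\infty$, and the rate is uniform in $k$.

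Next I would estimate $W_2(\mu_k,\mu)$ by a triangle-inequality decomposition: for any $T>0$,
\begin{align*}
W_2(\mu_k,\mu)\le W_2\big(\mu_k,\mathcal L_{X_{k,T}}\big)+W_2\big(\mathcal L_{X_{k,T}},\mathcal L_{X_T}\big)+W_2\big(\mathcal L_{X_T},\mu\big).
\end{align*}
The first term is bounded by $e^{-cT/2}\,(E|\xi-X_{k,0}'|^2)^{1/2}$ where $X_{k,0}'\sim\mu_k$ is chosen independent of the dynamics (here one uses that $\mu_k\in\mathcal P_2(\R^d)$ and that the contraction above compares the $\mu_k$-started solution, which is stationary with law $\mu_k$, to the $\xi$-started solution); more carefully, starting one copy from $\xi$ and the other from a $\mu_k$-distributed variable gives $W_2(\mathcal L_{X_{k,T}},\mu_k)^2=W_2(\mathcal L_{X_{k,T}},\mathcal L_{X_{k,T}'})^2\le e^{-cT}E|\xi-X_{k,0}'|^2$. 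The third term is handled identically with $\mu$ in place of $\mu_k$. The middle term $W_2(\mathcal L_{X_{k,T}},\mathcal L_{X_T})^2\le E\sup_{0\le t\le T}|X_{k,t}-X_t|^2$ tends to $0$ as $k\to\infty$ for each fixed $T$ by Proposition \ref{Thm1} (note condition \eqref{IniCon1} is trivially satisfied here since both solutions start from the same $\xi$, and \eqref{CoeCon1} is assumed). One then chooses $T$ large to make the first and third terms smaller than $\var/2$ (uniformly in $k$, using the uniform rate $c$ and the fact that $\sup_k |\mu_k|_2<\infty$ — see below), then lets $k\to\infty$ to kill the middle term; since $\var>0$ is arbitrary, \eqref{3Con_1} follows.

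The main obstacle is the uniform control of the second moments $\sup_k|\mu_k|_2<\infty$, which is needed to bound $E|\xi-X_{k,0}'|^2\le 2E|\xi|^2+2|\mu_k|_2^2$ uniformly in $k$ so that the $e^{-cT/2}$ factor genuinely makes the tail terms uniformly small. I would obtain this from a Lyapunov/dissipativity estimate: applying It\^o's formula to $|X_{k,t}|^2$ under the dissipativity condition (with $y=0$) and the linear growth of $\sigma_k$ with the $k$-uniform constant $K$, one gets $\frac{d}{dt}E|X_{k,t}|^2\le -c' E|X_{k,t}|^2+C'$ with $c',C'$ independent of $k$, hence $\sup_{t\ge0}E|X_{k,t}|^2\le E|\xi|^2+C'/c'$; passing to the limit $t\to\infty$ and using weak convergence plus uniform integrability (or Fatou) yields $|\mu_k|_2^2\le E|\xi|^2+C'/c'$ uniformly in $k$. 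A secondary technical point is justifying the It\^o computations and the exchange of $W_2$ with the $L^2$-coupling bound; these are standard once one notes that the coupling $(X_t,Y_t)$ built on the common probability space is admissible for $W_2(\mathcal L_{X_t},\mathcal L_{Y_t})$.
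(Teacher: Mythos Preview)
Your overall strategy (uniform exponential contractivity of each semigroup in $W_2$, a three-term triangle inequality, and a uniform second-moment bound on the $\mu_k$) is sound and is a legitimate, more modular alternative to the paper's route. The paper instead does a single It\^o computation on $|X_{k,t}-X_t|^2$ for solutions of the \emph{different} equations, splitting $b_k(X_{k,s},\cdot)-b(X_s,\cdot)$ into $[b_k(X_{k,s},\cdot)-b_k(X_s,\cdot)]+[b_k(X_s,\cdot)-b(X_s,\cdot)]$ (and similarly for $\sigma$), uses the monotonicity on the first bracket and pointwise convergence/linear growth on the second, applies Gronwall with the negative rate $-(2L_1-2L_2-8L^2-\varepsilon)$, and then invokes lower semicontinuity of $W_2$ along $\mathcal L_{X_{k,t}}\Rightarrow\mu_k$, $\mathcal L_{X_t}\Rightarrow\mu$ to conclude. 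So the paper bypasses your outer triangle inequality entirely.

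There is, however, a genuine gap in your middle term. You bound $W_2(\mathcal L_{X_{k,T}},\mathcal L_{X_T})$ by $E\sup_{t\le T}|X_{k,t}-X_t|^2$ and cite Proposition~\ref{Thm1}, but that proposition requires the drifts $b,b_k$ to be Lipschitz, and in Proposition~\ref{Thm3} this hypothesis has been \emph{replaced} by the one-sided monotonicity bound. The proof of Proposition~\ref{Thm1} squares $|b_k(\,\cdot\,)-b_k(\,\cdot\,)|$ and uses the two-sided Lipschitz constant, which you no longer have. To repair this you must estimate $E|X_{k,T}-X_T|^2$ directly via It\^o's formula and the monotonicity of $b_k$, splitting off the remainder $b_k(X_s,\mathcal L_{X_s})-b(X_s,\mathcal L_{X_s})$ and $\sigma_k(X_s,\mathcal L_{X_s})-\sigma(X_s,\mathcal L_{X_s})$ and controlling these by pointwise convergence plus the uniform linear-growth bound. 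But that computation is exactly the paper's core estimate; once you have it, your contractivity terms (i) and (iii) and the triangle inequality become redundant, since the same It\^o estimate already yields $E|X_{k,t}-X_t|^2\to 0$ and one can pass to the invariant measures via $W_2(\mu_k,\mu)\le\liminf_{t\to\infty}(E|X_{k,t}-X_t|^2)^{1/2}$. In short, your decomposition is correct in spirit but does not avoid the paper's main calculation; it only postpones it.
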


\section{Application}
In this section, we give some examples to illustrate our theoretical results.
\begin{exam}
Consider the McKean-Vlasov SDEs
\begin{align*}
dX_{\lambda,t}
& = -\lambda X_{\lambda,t}\int_{\mathbb R}y^2\mathcal L_{X_{\lambda,t}}(dy)dt+(\sqrt 2+\lambda)X_{\lambda,t}dW_t\\
& =:
b_{\lambda}(X_{\lambda,t},\mathcal L_{X_{\lambda,t}})dt+\sigma_{\lambda}(X_{\lambda,t},\mathcal L_{X_{\lambda,t}})dW_t,
\end{align*}
where $\lambda\in [0,1]$ is a parameter and $W$ is a one-dimensional Brownian motion.
Then the solution $X_{\lambda,t}$ with initial value $X_{\lambda,0}$ for the above MVSDEs converges to $X_t$,
which is the solution of the following SDEs with initial value $X_0$, in $W_2$ uniformly on $[0,T]$
%if $\lim_{\lambda\to 1}W_2(\mathcal L_{X_{\lambda,0}}, \mathcal L_{X_0})=0$,
\begin{align*}
dX_{t}
& = - X_{t}\int_{\mathbb R}y^2\mathcal L_{X_{t}}(dy)dt+(\sqrt 2+1)X_{t}dW_t\\
& = b(X_t,\mathcal{L}_{X_t})dt+ \sigma(X_t,\mathcal{L}_{X_t})dW_t.
\end{align*}
\end{exam}
\begin{proof}
(H1), (H3) and (H5) obviously hold. Now we prove that (H2) and (H4) hold.
For any $x\in\mathbb R, \mu\in \mathcal P(\mathbb R)$, let $V(x,\mu)=x^{6}+\int_{\mathbb R}y^{10}\mu(dy)$. Then (H4) holds and
\begin{align*}
\partial _xV(x,\mu)&={}6x^5, ~ \partial _x^2V(x,\mu)={}30x^4,\\
\partial _{\mu}V(x,\mu)(z)&={}10z^9, ~ \partial _z\partial _{\mu}V(x,\mu)(z)=90z^8.
\end{align*}
Thus, we have
\begin{align*}
(\mathcal LV)(x,\mu)
={} &
-\lambda x\int y^2\mu(dy)\cdot 6x^5+ \frac 12 \cdot (\sqrt2+\lambda)^2x^2\cdot 30x^4\\
& +
\int\bigg((-\lambda z\int y^2\mu(dy)) \cdot 10z^9+\frac 12 \cdot (\sqrt2+\lambda)^2z^2\cdot 90z^8 \bigg)\mu(dz)\\
\le{} &
90x^6+ \int 270z^6 \mu(dz)
\le 270 V(x,\mu),
\end{align*}
i.e. (H2) holds.
Therefore, by Theorem \ref{thm1}, we get the desired result.
\end{proof}

\begin{exam}
For any $x\in\mathbb R, \mu\in \mathcal P(\mathbb R)$, let
\begin{align*}
b_{\lambda}(x,\mu)&=-3\lambda x+3\lambda\int_{\mathbb R} y\mu(dy), ~\sigma_{\lambda}(x,\mu)= x-\int_{\mathbb R} y\mu(dy),\\
b(x,\mu)&=-3x+3\int_{\mathbb R} y\mu(dy), ~\sigma(x,\mu)= x-\int_{\mathbb R} y\mu(dy),
\end{align*}
where $\lambda$ is a parameter in $[1,2]$.
Then $\mu_{\lambda}\to\mu$ weakly as $\lambda\to 1$, where $\mu_{\lambda}$ and $\mu$ are invariant measures of following MVSDEs:
\begin{align*}
dX_{\lambda,t}&= b_{\lambda}(X_{\lambda,t},\mathcal{L}_{X_{\lambda,t}})dt+ \sigma_{\lambda}(X_{\lambda,t},\mathcal{L}_{X_{\lambda,t}})dW_t,\\
dX_t&= b(X_t,\mathcal{L}_{X_t})dt+ \sigma(X_t,\mathcal{L}_{X_t})dW_t.
\end{align*}
\end{exam}

\begin{proof}
Conditions (H1), (H3) and (H5) obviously hold.
Let $V(x,\mu)=\frac 14\big(x-\int_{\mathbb R} y\mu(dy)\big)^4$. Then it is immediately to see that (H4) holds. Note that we have
\begin{align*}
\partial _xV(x,\mu)&={}\big(x-\int y\mu(dy)\big)^3, ~ \partial _x^2V(x,\mu)={}3\big(x-\int y\mu(dy)\big)^2,\\
\partial _{\mu}V(x,\mu)(z)&={}-\big(x-\int y\mu(dy)\big)^3, ~ \partial _z\partial _{\mu}V(x,\mu)(z)=0,
\end{align*}
and
\begin{align*}
(\mathcal LV)(x,\mu)
=&{}
-3\lambda\big(x-\int y\mu(dy)\big)^4+\frac 32\big(x-\int y\mu(dy)\big)^4\\
&+{}
\int 3\big(z-\int y\mu(dy)\big)\big(x-\int y\mu(dy)\big)^3\mu(dz)\\
=&{}
(-3\lambda+\frac 32)\big(x-\int y\mu(dy)\big)^4
\le
-\frac 32V(x,\mu).
\end{align*}
That is to say that (H2) holds.
Therefore, by Theorem \ref{thm2}, we obtain the desired results.
\end{proof}
\begin{rem}
It is easy to verify that the coefficients $b_{\lambda},\sigma_{\lambda},b,\sigma$ do not satisfy the conditions of Proposition \ref{Thm3},
so here we apply Theorem \ref{thm2} to illustrate the result.
\end{rem}

\begin{exam}
Consider the McKean-Vlasov SDE
\begin{align*}
dX_{\lambda,t}
& =
\big((-6+\lambda)X_{\lambda,t}+\int_{\mathbb R}y\mathcal L_{X_{\lambda,t}}(dy)\big)dt
+\big(X_{\lambda,t}+\lambda\int_{\mathbb R}y\mathcal L_{X_{\lambda,t}}(dy)\big)dW(t)\\
& =:
b_{\lambda}(X_{\lambda,t},\mathcal L_{X_{\lambda,t}})dt+\sigma_{\lambda}(X_{\lambda,t},\mathcal L_{X_{\lambda,t}})dW_t,
\end{align*}
where $\lambda\in[0,1)$ is a parameter and $W$ is a one-dimensional Brownian motion.
Clearly $b_{\lambda}$ and $\sigma_{\lambda}$ are Lipschitz and satisfy linear growth condition.
We note that for any $x\in\mathbb R, \mu\in \mathcal P(\mathbb R)$,
\begin{align*}
\lim_{\lambda\to\lambda_0}|b_{\lambda}(x,\mu)-b_{\lambda_0}(x,\mu)|+|\sigma_{\lambda}(x,\mu)-\sigma_{\lambda_0}(x,\mu)|=0.
\end{align*}
Then, by Proposition \ref{Thm1} and Corollary \ref{Cor1}, the unique solution $X_{\lambda}$ converges to $X_{\lambda_0}$
in $L^2$ uniformly on $[0,T]$,
and the corresponding measure $\mathcal L_{X_{\lambda}}$ converges to $\mathcal L_{X_{\lambda_0}}$ under $\bar W_2$.

Moreover, we have
\begin{align*}
\langle x-y, b_{\lambda}(x,\mu)-b_{\lambda}(y,\nu)\rangle
= &
\langle x-y,(-6+\lambda)(x-y)+ \int z\mu(dz)-\int z\nu(dz)\rangle\\
\le &
(-6+\lambda)|x-y|^2+|x-y|W_2(\mu,\nu),\\
|\sigma_{\lambda}(x,\mu)-\sigma_{\lambda}(y,\nu)|
= &
|x-y+\lambda\int z\mu(dz)-\lambda\int z\nu(dz)|\\
\le &
|x-y|+\lambda W_2(\mu,\nu).
\end{align*}
Therefore, $L_1=6-\lambda,L_2=L=1$, and $2L_1-2L_2-8L^2= 2(6-\lambda)-2-8= 2(1-\lambda)>0$.
Applying Proposition \ref{Thm3}, we obtain that the uniqueness invariant measure $\mu_{\lambda,I}$ converges to $\mu_{\lambda_0,I}$ under $W_2$.
\end{exam}

\section*{Appendix}
In this section, we give the proof of Proposition \ref{Thm1} and Proposition \ref{Thm3}.
\begin{proof}[Proof of Proposition \ref{Thm1}]
Note that we have
\begin{align*}
X_{k,t}-X_t
= &
X_{k,0}-X_0 + \int_0^t b_k(s,X_{k,s},\mathcal{L}_{{k,s}})- b(s,X_s,\mathcal{L}_{X_s})ds\\
& +
\int_0^t \sigma_k(s,X_{k,s},\mathcal{L}_{X_{k,s}})- \sigma(s,X_s,\mathcal{L}_{X_s})dW_s\\
= &
X_{k,0}-X_0 + \int_0^t b_k(s,X_s,\mathcal{L}_{X_s})- b(s,X_s,\mathcal{L}_{X_s})ds\\
&+
\int_0^t \sigma_k(s,X_s,\mathcal{L}_{X_s})- \sigma(s,X_s,\mathcal{L}_{X_s})dW_s\\
& +
\int_0^t b_k(s,X_{k,s},\mathcal{L}_{X_{k,s}})- b_k(s,X_s,\mathcal{L}_{X_s})ds\\
&+
\int_0^t \sigma_k(s,X_{k,s},\mathcal{L}_{X_{k,s}})- \sigma_k(s,X_s,\mathcal{L}_{X_s})dW_s\\
=: &
\eta_{k,t} + \int_0^t b_k(s,X_{k,s},\mathcal{L}_{X_{k,s}})- b_k(s,X_s,\mathcal{L}_{X_s})ds\\
&+
\int_0^t \sigma_k(s,X_{k,s},\mathcal{L}_{X_{k,s}})- \sigma_k(s,X_s,\mathcal{L}_{X_s})dW_s,
\end{align*}
where
\begin{align*}
\eta_{k,t}:=&X_{k,0}-X_0 + \int_0^t b_k(s,X_s,\mathcal{L}_{X_s})- b(s,X_s,\mathcal{L}_{X_s})ds\\
&+
\int_0^t \sigma_k(s,X_s,\mathcal{L}_{X_s})- \sigma(s,X_s,\mathcal{L}_{X_s})dW_s.
\end{align*}
By Cauchy-Schwarz inequality and martingale inequality, we get
\begin{align*}
&E\big(\sup_{0\leq s\leq t}|X_{k,s}-X_s|^2\big)\\
\leq{} &
3E\big(\sup_{0\leq s\leq t}|\eta_{k,s}|^2\big)
 +{}
 3E\bigg(\sup_{0\leq s\leq t}\bigg|\int_0^s b_k(r,X_{k,r},\mathcal{L}_{X_{k,r}})- b_k(r,X_r,\mathcal{L}_{X_r})dr\bigg|^2\bigg)\\
 &+{}
 3E\bigg(\sup_{0\leq s\leq t}\bigg|\int_0^s \sigma_k(r,X_{k,r},\mathcal{L}_{X_{k,r}})- \sigma_k(r,X_r,\mathcal{L}_{X_r})dW_r\bigg|^2\bigg)\\
\leq{} &
3E\big(\sup_{0\leq s\leq t}|\eta_{k,s}|^2\big)
 +{}
 3TE\bigg(\int_0^t |b_k(r,X_{k,r},\mathcal{L}_{X_{k,r}})- b_k(r,X_r,\mathcal{L}_{X_r})|^2 dr\bigg)\\
 &+{}
 3\times 4 E\bigg(\int_0^t |\sigma_k(r,X_{k,r},\mathcal{L}_{X_{k,r}})- \sigma_k(r,X_r,\mathcal{L}_{X_r})|^2 dr\bigg)\\
\leq{} &
3E\big(\sup_{0\leq s\leq t}|\eta_{k,s}|^2\big)
 +{}
 3TE\int_0^t 2L^2\big(|X_{k,r}-X_r|^2+ W_2(\mathcal{L}_{X_{k,r}},\mathcal{L}_{X_r})^2 \big)dr\\
 &+{}
 3\times 4E\int_0^t 2L^2\big(|X_{k,r}-X_r|^2+ W_2(\mathcal{L}_{X_{k,r}},\mathcal{L}_{{k,r}})^2 \big)dr\\
\leq{} &
3E(\sup_{0\leq s\leq t}|\eta_{k,s}|^2)
 +{}
 2(6TL^2+ 24L^2)\int_0^t E\big(\sup_{0\leq s\leq r}|X_{k,s}-X_s|^2\big)dr,
\end{align*}
for any $t\in [0,T]$.
Applying Gronwall's inequality, we obtain
$$
E\sup_{0\leq s\leq t}|X_{k,s}-X_s|^2
\leq
3E(\sup_{0\leq s\leq t}|\eta_{k,s}|^2)e^{2(6TL^2+ 24L^2)t}.
$$
So \eqref{1Con_2} holds if we can prove
$$
\lim_{k\to \infty}E(\sup_{0\leq t\leq T}|\eta_{k,t}|^2)=0.
$$

By Cauchy-Schwarz inequality and martingale inequality, we obtain
\begin{align*}
E(\sup_{0\leq t\leq T}|\eta_{k,t}|^2)
\leq{}&
3E|X_{k,0}-X_0|^2\\
&+{}
3E\bigg(\sup_{0\leq t\leq T}\bigg|\int_0^t b_k(s,X_s,\mathcal{L}_{X_s})- b(s,X_s,\mathcal{L}_{X_s})ds\bigg|^2\bigg)\\
&+{}
3E\bigg(\sup_{0\leq t\leq T}\bigg|\int_0^t \sigma_k(s,X_s,\mathcal{L}_{X_s})- \sigma(s,X_s,\mathcal{L}_{X_s})dW_s\bigg|^2\bigg)\\
\leq{} &
3E|X_{k,0}-X_0|^2\\
&+{}
3TE\int_0^T |b_k(s,X_s,\mathcal{L}_{X_s})- b(s,X_s,\mathcal{L}_{X_s})|^2 ds\\
&+{}
3\times 4E\int_0^T |\sigma_k(s,X_s,\mathcal{L}_{X_s})-\sigma(s,X_s,\mathcal{L}_{X_s})|^2 ds.
\end{align*}
We note that
\begin{align*}
|b_k(s,X_s,\mathcal{L}_{X_s})- b(s,X_s,\mathcal{L}_{X_s})|^2
\leq{} &
2|b_k(s,X_s,\mathcal{L}_{X_s})|^2+ 2|b(s,X_s,\mathcal{L}_{X_s})|^2\\
\leq{} &
12K^2\big(1+ |X_s|^2+ \mathcal{L}_{X_s}(|\cdot|^2)\big),
\end{align*}
and $E\sup_{0\le s\le T}|X_s|^2<\infty$ by \cite[Theorem 2.1]{Wang_18} or \cite[Theorem 3.1]{LM1}.
Applying Lebesgue dominated convergence theorem, we get
$$
\lim_{k\to \infty}E\bigg|\int_0^T b_k(s,X_s,\mathcal{L}_{X_s})- b(s,X_s,\mathcal{L}_{X_s})ds\bigg|^2=0.
$$
In the same way, we obtain
\begin{align*}
\lim_{k\to \infty}E\bigg|\int_0^T \sigma_k(s,X_s,\mathcal{L}_{X_s})- \sigma(s,X_s,\mathcal{L}_{X_s})dW_s\bigg|^2=0.
\end{align*}
Now $\lim_{k\to \infty}E\sup_{0\le t\le T}|\eta_{k,t}|^2=0$ holds and hence \eqref{1Con_2} is proved.
\end{proof}

\begin{proof}[Proof of Proposition \ref{Thm3}]
By \cite[Theorem 3.1]{Wang_18}, there exist invariant measures for MVSDEs \eqref{EQPIn} and \eqref{EQIn},
denoted by $\mu_k,\mu$, respectively.
Let $X_{k,0},X_0$ be random variables such that
$$
\lim_{k\to\infty}E|X_{k,0}-X_0|^2=0,
$$
Denote $X_{k,t},X_t$ by the corresponding solutions with initial values $X_{k,0},X_0$, respectively.
By It\^o's formula, we have
\begin{align*}
E|X_{k,t}- X_t|^2
= &
E|X_{k,0}-X_0|^2\\
 & +
 2E\int_0^t\langle X_{k,s}- X_s, b_k(X_{k,s}, \mathcal L_{X_{k,s}})-b(X_s, \mathcal L_{X_s})\rangle ds\\
 & +
 E\int_0^t|\sigma_k(X_{k,s}, \mathcal L_{X_{k,s}})-\sigma(X_s, \mathcal L_{X_s})|^2ds\\
\leq &
E|X_{k,0}-X_0|^2\\
 & +
 2E\int_0^t\langle X_{k,s}- X_s, b_k(X_{k,s}, \mathcal L_{X_{k,s}})-b_k(X_s, \mathcal L_{X_s})\rangle ds\\
 & +
 2E\int_0^t\langle X_{k,s}- X_s, b_k(X_s, \mathcal L_{X_s})-b(X_s, \mathcal L_{X_s})\rangle ds\\
 & +
 2E\int_0^t|\sigma_k(X_{k,s}, \mathcal L_{X_{k,s}})-\sigma_k(X_s, \mathcal L_{X_s})|^2ds\\
 & +
 2E\int_0^t|\sigma_k(X_s, \mathcal L_{X_s})-\sigma(X_s, \mathcal L_{X_s})|^2ds\\
\leq &
E|X_{k,0}-X_0|^2\\
 & + E\int_0^t -2L_1|X_{k,s}- X_s|^2+ 2L_2|X_{k,s}- X_s|W_2(\mathcal L_{X_{k,s}},\mathcal L_{X_s})ds\\
 & +
 4L^2 E\int_0^t |X_{k,s}- X_s|^2+ W_2(\mathcal L_{X_{k,s}},\mathcal L_{X_s})^2ds\\
 & +
 2E\int_0^t |X_{k,s}- X_s|\cdot |b_k(X_s, \mathcal L_{X_s})-b(X_s, \mathcal L_{X_s})|ds\\
 & +
 2E\int_0^t|\sigma_k(X_s, \mathcal L_{X_s})-\sigma(X_s, \mathcal L_{X_s})|^2ds\\
\leq &
E|X_{k,0}-X_0|^2+\int_0^t(-2L_1+ 2L_2+ 8L^2+ \epsilon)E|X_{k,s}- X_s|^2ds\\
 & +
 \frac 1\epsilon E\int_0^t|b_k(X_s, \mathcal L_{X_s})-b(X_s, \mathcal L_{X_s})|^2ds\\
 & +
 2E\int_0^t|\sigma_k(X_s, \mathcal L_{X_s})-\sigma(X_s, \mathcal L_{X_s})|^2ds,
\end{align*}
where $0<\epsilon< 2L_1- 2L_2- 8L^2$, and the last inequality holds owing to Cauchy-Schwarz inequality.
By Gronwall's inequality, we have
\begin{align}\label{IeqINC}
E|X_{k,t}- X_t|^2 \leq e^{(-2L_1+ 2L_2+ 8L^2+ \epsilon)t}E\eta_{k,t},
\end{align}
where
\begin{align*}
\eta_{k,t}:=&|X_{k,0}-X_0|^2+\frac 1\epsilon \int_0^t|b_k(X_s, \mathcal L_{X_s})-b(X_s, \mathcal L_{X_s})|^2ds\\
 & +
 2\int_0^t|\sigma_k(X_s, \mathcal L_{X_s})-\sigma(X_s, \mathcal L_{X_s})|^2ds.
\end{align*}
Applying \cite[Lemma 4.3]{Villani} we obtain
\begin{align*}
W_2(\mu_k,\mu)
\le
\liminf_{t\to \infty}W_2(\mathcal L_{X_{k,t}},\mathcal L_{X_t})
\le
\liminf_{t\to \infty}(E|X_{k,t}- X_t|^2)^{\frac 12}.
\end{align*}
If we can prove $\lim_{k\to\infty}\lim_{t\to\infty}e^{(-2L_1+ 2L_2+ 8L^2+ \epsilon)t}E\eta_{k,t}=0$, \eqref{3Con_1} holds.

By conditions on coefficients $b,\sigma$ with $y=0, \nu=\delta_0$, we get
\begin{align*}
\langle x,b(x,\mu)\rangle
&=
\langle x,b(x,\mu)-b(0,\delta_0)\rangle+ \langle x,b(0,\delta_0)\rangle\\
&\leq
-L_1|x|^2+ L_2x\cdot\big(\mu(|\cdot|^2)\big)^{\frac 12}+ |b(0,\delta_0)|\cdot|x|,\\
|\sigma(x,\mu)|^2
&\leq
2|\sigma(x,\mu)-\sigma(0,\delta_0)|^2+ 2|\sigma(0,\delta_0)|^2\\
&\leq
4L^2(|x|^2+\mu(|\cdot|^2))+ 2|\sigma(0,\delta_0)|^2.
\end{align*}
Applying It\^o's formula and Cauchy-Schwarz inequality, we obtain
\begin{align*}
E|X_t|^2
&=
E|X_0|^2+ 2E\int_0^t\langle X_s,b(X_s,\mathcal L_{X_s})\rangle ds+ E\int_0^t|\sigma(X_s,\mathcal L_{X_s})|^2 ds\\
&\leq
E|X_0|^2+ E\int_0^t (-2L_1+ 2L_2+ 8L^2+ \epsilon)|X_s|^2ds+ (\frac 1\epsilon |b(0,\delta_0)|^2+2|\sigma(0,\delta_0)|^2)t.
\end{align*}
By Gronwall's inequality, we have
$$
E|X_t|^2
\leq
\big(E|X_0|^2+(\frac 1\epsilon |b(0,\delta_0)|^2+2|\sigma(0,\delta_0)|^2)t\big) e^{(-2L_1+ 2L_2+ 8L^2+ \epsilon)t}.
$$
Thus, by linear growth condition of coefficients $b_k$ and $b$ we get
\begin{align*}
&E\int_0^t|b_k(X_s, \mathcal L_{X_s})-b(X(s), \mathcal L_{X(s)})|^2ds\\
\leq &
2E\int_0^t|b_k(X_s, \mathcal L_{X_s})|^2+ |b(X_s, \mathcal L_{X_s})|^2ds\\
\leq &
2E\int_0^t6K^2\big(1+ |X_s|^2+ \mathcal L_{X_s}(|\cdot|^2)\big)ds\\
\leq &
12K^2t+ 24K^2\int_0^t\big(E|X_0|^2+(\frac 1\epsilon |b(0,\delta_0)|^2+2|\sigma(0,\delta_0)|^2)s\big) e^{(-2L_1+ 2L_2+ 8L^2+ \epsilon)s}ds.
\end{align*}
Similarly, we obtain
\begin{align*}
&E\int_0^t|\sigma_k(X_s, \mathcal L_{X_s})-\sigma(X_s, \mathcal L_{X_s})|^2ds\\
\leq &
12K^2t+ 24K^2\int_0^t\big(E|X_0|^2+(\frac 1\epsilon |b(0,\delta_0)|^2+2|\sigma(0,\delta_0)|^2)s\big) e^{(-2L_1+ 2L_2+ 8L^2+ \epsilon)s}ds.
\end{align*}
Thus, we get $\lim_{k\to\infty}\lim_{t\to \infty}e^{(-2L_1+ 2L_2+ 8L^2+ \epsilon)t}E\eta_{k,t}=0$.
%In summary, we have
%\begin{align*}
%\lim_{k\to\infty}W_2(\mu_k, \mu)^2
%\leq
%\lim_{k\to\infty}\liminf_{t\to \infty}W_2(\mu_k(t), \mu(t))^2
%\leq
%\lim_{k\to\infty}\liminf_{t\to \infty}E|X_k(t)-X(t)|^2=0.
%\end{align*}
The proof is complete.
\end{proof}

\section*{Acknowledgement}
%The authors sincerely thank the anonymous referee for his/her very careful reading and
%valuable comments, which lead to significant improvements of this paper.
The first author is supported by
National Funded Postdoctoral Researcher Program (Grant GZC20230414)
and Fundamental Research Funds for the Central Universities (Grant 135114002).
The second author is
supported by National Key R\&D Program of China (No. 2023YFA1009200), NSFC (Grants
11871132, 11925102), LiaoNing Revitalization Talents Program (Grant XLYC2202042), and
Dalian High-level Talent Innovation Program (Grant 2020RD09).

\end{document}